\documentclass[12pt]{amsart}
\usepackage[all]{xy}
\usepackage{fancyhdr}

\usepackage{verbatim}
\usepackage{amssymb, amsmath}
\usepackage{graphicx}

\usepackage{caption}
\usepackage{subcaption}
\captionsetup[subfigure]{margin=0pt, parskip=0pt, hangindent=0pt, indention=0pt,
labelformat=parens, labelfont=rm}

\usepackage{mathrsfs}
\usepackage[colorlinks,linkcolor=blue,anchorcolor=blue,citecolor=blue,backref=page]{hyperref}


\usepackage{xcolor}

\usepackage{url}

\usepackage{mathrsfs}
\usepackage{bibentry}
\usepackage[norefs,nocites]{refcheck}

\begin{document}
\newtheorem{thm}{Theorem}[section]
\newtheorem{lem}[thm]{Lemma}
\newtheorem{prop}[thm]{Proposition}
\newtheorem{cor}[thm]{Corollary}
\newtheorem{conj}[thm]{Conjecture}
\newtheorem{hypothesis}[thm]{Hypothesis}
\newtheorem{proj}[thm]{Project}
\newtheorem*{remark}{Remark}
\newtheorem{question}[thm]{Question}


\makeatletter
\@namedef{subjclassname@2020}{\textup{2020} Mathematics Subject Classification}
\makeatother

\newcommand{\rad}{\operatorname{rad}}

\newcommand{\Z}{{\mathbb Z}} 
\newcommand{\Q}{{\mathbb Q}}
\newcommand{\R}{{\mathbb R}}
\newcommand{\C}{{\mathbb C}}
\newcommand{\N}{{\mathbb N}}
\newcommand{\FF}{{\mathbb F}}
\newcommand{\fq}{\mathbb{F}_q}
\newcommand{\rmk}[1]{\footnote{{\bf Comment:} #1}}

\renewcommand{\mod}{\;\operatorname{mod}}
\newcommand{\ord}{\operatorname{ord}}
\newcommand{\TT}{\mathbb{T}}
\renewcommand{\i}{{\mathrm{i}}}
\renewcommand{\d}{{\mathrm{d}}}
\renewcommand{\^}{\widehat}
\newcommand{\HH}{\mathbb H}
\newcommand{\Vol}{\operatorname{vol}}
\newcommand{\area}{\operatorname{area}}
\newcommand{\tr}{\operatorname{tr}}
\newcommand{\norm}{\mathcal N} 
\newcommand{\intinf}{\int_{-\infty}^\infty}
\newcommand{\ave}[1]{\left\langle#1\right\rangle} 
\newcommand{\Var}{\operatorname{Var}}
\newcommand{\Prob}{\operatorname{Prob}}
\newcommand{\sym}{\operatorname{Sym}}
\newcommand{\disc}{\operatorname{disc}}
\newcommand{\CA}{{\mathcal C}_A}
\newcommand{\cond}{\operatorname{cond}} 
\newcommand{\lcm}{\operatorname{lcm}}
\newcommand{\Kl}{\operatorname{Kl}} 
\newcommand{\leg}[2]{\left( \frac{#1}{#2} \right)}  
\newcommand{\Li}{\operatorname{Li}}

\newcommand{\sumstar}{\sideset \and^{*} \to \sum}

\newcommand{\LL}{\mathcal L} 
\newcommand{\sumf}{\sum^\flat}
\newcommand{\Hgev}{\mathcal H_{2g+2,q}}
\newcommand{\USp}{\operatorname{USp}}
\newcommand{\conv}{*}
\newcommand{\dist} {\operatorname{dist}}
\newcommand{\CF}{c_0} 
\newcommand{\kerp}{\mathcal K}

\newcommand{\Cov}{\operatorname{cov}}
\newcommand{\Sym}{\operatorname{Sym}}

\newcommand{\Ht}{\operatorname{Ht}}

\newcommand{\E}{\operatorname{\mathbb E}} 
\newcommand{\sign}{\operatorname{sign}} 
\newcommand{\meas}{\operatorname{meas}} 

\newcommand{\divid}{d} 

\newcommand{\GL}{\operatorname{GL}}
\newcommand{\SL}{\operatorname{SL}}
\newcommand{\re}{\operatorname{Re}}
\newcommand{\im}{\operatorname{Im}}
\newcommand{\res}{\operatorname{Res}}

 \newcommand{\EWp}{\mathbb E^{\rm WP}_g} 
\newcommand{\orb}{\operatorname{Orb}}
\newcommand{\supp}{\operatorname{Supp}}
\newcommand{\mmfactor }{\textcolor{red}{c_{\rm Mir}}}
\newcommand{\Mg}{\mathcal M_g} 
\newcommand{\MCG}{\operatorname{Mod}} 
\newcommand{\Diff}{\operatorname{Diff}} 
\newcommand{\If}{I_f(L,\tau)}
\newcommand{\GOE}{\operatorname{GOE}}
\newcommand{\Ex}{\mathcal E} 
\newcommand{\alow}{\mathbf a}
\newcommand{\adag}{\alow^\dagger} 
\newcommand{\spec}{\mathcal X}
\newcommand{\eigen}{E} 
\newcommand{\tridiag}{\operatorname{tridiag}}

\title[The density conjecture for Juddian points]{The density conjecture for Juddian points  for the quantum Rabi model  }
\date{\today}
\author{Rishi Kumar and Ze\'ev Rudnick}
\address{School of Mathematical Sciences, Tel Aviv University, Tel Aviv 69978, Israel}
\email{rkumar@tauex.tau.ac.il} \email{rudnick@tauex.tau.ac.il}

\begin{abstract}
We study doubly degenerate (Juddian) eigenvalues for the Quantum Rabi Hamiltonian, a simple model of the interaction between a two-level atom and a single quantized mode of light. We prove a strong form of the density conjecture of Kimoto, Reyes-Bustos, and Wakayama, showing that any fixed value of the splitting between the two atomic levels, there is a dense set of coupling strengths for which the corresponding Rabi Hamiltonian admits Juddian eigenvalues. We also construct infinitely many sets of parameters for which the Rabi Hamiltonian admits two distinct Juddian eigenvalues. 
The fine structure of the zeros of classical Laguerre polynomials plays a key role in our methods. 
\end{abstract}
\keywords{Quantum Rabi model, Juddian eigenvalues, density conjecture, Laguerre polynomials, tri-diagonal matrix}
\subjclass[2020]{81Q10, 81V80}
\thanks{This research was supported by the ISRAEL SCIENCE FOUNDATION (grant No. 2860/24)}
 \maketitle

\section{Introduction}
The quantum Rabi model (QRM)
describes the interaction between a two-level atom (qubit), coupled to a quantized, single-mode harmonic oscillator. The QRM  has its origin in the semi-classical model of such interactions due to I. I. Rabi (1936) 
and its fully quantized version due to Jaynes and Cummings (1963), who introduced a ``rotating wave approximation'' which is exactly solvable and is very useful for small coupling.  However, recent advances in experimental techniques 
bring to the fore the need to understand the full QRM, without imposing the rotating wave approximation \cite{Larson&Mavrogordatos}. 
Thus the need for a better understanding of the mathematical theory of the QRM. In this note we advance this understanding by focusing on degenerate points in the spectrum, the ``Juddian eigenvalues''.

The Hamiltonian of the QRM is, after some simplifications,  
\[
H_{g,\Delta}= \adag \alow + \Delta \sigma_z  + g\sigma_x (\alow + \adag)    
\]
where $\sigma_x=\left(\begin{smallmatrix}0&1\\1&0 \end{smallmatrix}\right)$, $\sigma_z=\left(\begin{smallmatrix}1&0\\0&-1 \end{smallmatrix}\right)$ are the Pauli matrices of the two-level atom, $\Delta>0$ is half the splitting between the two atomic levels, $\adag$ and $\alow$ are the creation and annihilation operators of the harmonic oscillator whose frequency has been set to unity, and $g>0$ is the light-matter coupling strength.    

Eigenvalues $\eigen$ for which $\eigen+g^2$ is an integer are called ``exceptional''. These can only exist for special choices of the parameters $g$ and $\Delta$. Degeneracies in the spectrum of $H_{g,\Delta}$ (that is the eigenspace has dimension bigger than one) may only occur for exceptional eigenvalues. In such cases the eigenvalue is called ``degenerate exceptional'', or ``Juddian'', necessarily the dimension of the eigenspace is two, and in a suitable representation the eigenvectors take a simple form.  
 
The values of the parameters where Juddian eigenvalues occur are given by (at least one of) a sequence of polynomial constraints \cite{Kus}
$$P_n( (2g)^2,\Delta^2)=0.$$   
Since we have a countable collection of polynomial constraints, for generic parameters $(g,\Delta)$, there are no Juddian eigenvalues; the set of parameters where there is at least one Juddian eigenvalue has measure zero. 
A conjecture raised in \cite[Conjecture 6.1]{RBW} is that the set of parameters for which there is a Juddian eigenvalue is dense in the $(g,\Delta)$$-$plane. 
We prove a strong form of the density conjecture:  
\begin{thm}\label{thm:densityJudd}
Fix the level splitting $2\Delta>0$. Then, the set of coupling constants $g>0$ for which $H_{g,\Delta}$ admits Juddian eigenvalues is dense in the set 
of all possible coupling constants.  
Moreover, we have a limit density: For any fixed $\Gamma>0$, as $N\to \infty$,
\[
\#\{g \leq \Gamma :  N-g^2 \mathrm{\;  is \; a \; Juddian \; eigenvalue \; for\; } H_{g,\Delta} \} \sim
\frac{4 }{\pi}\cdot  \Gamma \cdot  \sqrt{N }.
\]
\end{thm}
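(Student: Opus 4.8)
The plan is to reduce both assertions to a single counting problem for the zeros of the constraint polynomials $P_N$, and then to locate those zeros by comparison with the zeros of classical Laguerre polynomials. Following \cite{Kus}, the number $N-g^2$ is a Juddian eigenvalue of $H_{g,\Delta}$ if and only if $P_N\big((2g)^2,\Delta^2\big)=0$, and the map $x\mapsto P_N(x,\Delta^2)$ is, up to a nonzero constant, the characteristic polynomial in the spectral variable $x=(2g)^2$ of an explicit real Jacobi (tridiagonal) matrix $M_N=M_N(\Delta)$; in particular $P_N(\,\cdot\,,\Delta^2)$ has degree $N$ and its zeros $0<\xi^{(N)}_1<\xi^{(N)}_2<\cdots$ are real, simple and positive. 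Since $g\mapsto(2g)^2$ is increasing,
\[
\#\big\{g\le\Gamma':\ N-g^2\text{ is a Juddian eigenvalue for }H_{g,\Delta}\big\}=\#\big\{k:\ \xi^{(N)}_k\le 4(\Gamma')^2\big\}
\]
for every $\Gamma'>0$, so everything comes down to counting zeros of $P_N(\,\cdot\,,\Delta^2)$ in a fixed interval $[0,t]$ as $N\to\infty$. The density statement then follows from the quantitative one: subtracting the asymptotics for two thresholds gives $\#\{g\in[a,b]:\ N-g^2\text{ Juddian}\}\sim\tfrac4\pi(b-a)\sqrt N\to\infty$ for all $0<a<b$, so the union over $N$ of the Juddian coupling constants meets every subinterval of $(0,\infty)$.

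\emph{Hard-edge asymptotics.} The heart of the matter is the claim that, for fixed $\Delta$ and fixed $t>0$,
\[
\#\big\{k:\ \xi^{(N)}_k\le t\big\}=\frac{2}{\pi}\sqrt{Nt}\,\big(1+o(1)\big),\qquad N\to\infty.
\]
I would prove this by comparing $M_N$ with the Jacobi matrix $J^{(\alpha)}_N$ of the Laguerre polynomials $L^{(\alpha)}_N$, where $\alpha=\alpha(\Delta)$ is the bounded exponent produced by the level splitting. First one checks that the three-term recursion underlying $P_N$ has coefficients asymptotic to those of $L^{(\alpha)}_N$; this already identifies the expected main term, since the rescaled counting measures $\tfrac1N\sum_k\delta_{\xi^{(N)}_k/N}$ then converge to the Marchenko--Pastur law $\tfrac1{2\pi}\sqrt{(4-u)/u}\,du$ on $[0,4]$, whose density near the hard edge is $\tfrac1\pi u^{-1/2}$, and integrating over $[0,t/N]$ produces $\tfrac2\pi\sqrt{Nt}$. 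The difficulty is that $[0,t]$ holds only an $o(1)$-fraction of the zeros, so weak convergence of the counting measure is far too coarse: one must locate the lowest $\asymp\sqrt N$ zeros to relative precision $o(1)$. This is exactly where the fine structure of the Laguerre zeros enters --- the classical fact that the $k$-th positive zero of $L^{(\alpha)}_N$ is $\tfrac{j_{\alpha,k}^2}{4N}\,(1+o(1))$ uniformly for $1\le k\le C\sqrt N$, where $j_{\alpha,k}=\pi k+O(1)$ are the positive zeros of the Bessel function $J_\alpha$; this yields the count $\tfrac2\pi\sqrt{Nt}(1+o(1))$ for $L^{(\alpha)}_N$ itself and, crucially, shows the count is insensitive to $\alpha$, which is what makes $\Delta$ disappear from the final constant. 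Transferring the count from $J^{(\alpha)}_N$ to $M_N$ is carried out through relative asymptotics of the two recursions --- or, alternatively, by sandwiching $P_N(\,\cdot\,,\Delta^2)$ between two Laguerre polynomials via an interlacing/Sturm comparison --- an estimate just strong enough to carry the leading term $\tfrac2\pi\sqrt{Nt}$.

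\emph{Conclusion and main obstacle.} Taking $t=4\Gamma^2$ in the displayed estimate gives
\[
\#\big\{g\le\Gamma:\ N-g^2\text{ is a Juddian eigenvalue for }H_{g,\Delta}\big\}=\tfrac2\pi\sqrt{4\Gamma^2 N}\,\big(1+o(1)\big)=\tfrac4\pi\,\Gamma\sqrt N\,\big(1+o(1)\big),
\]
the asserted limit density; applying the same estimate to arbitrary thresholds yields, as in the reduction step, the density of the set of Juddian coupling constants. The main obstacle is the comparison in the second step: the bulk distribution of the zeros of $P_N$ is soft (a moment/trace computation on $M_N$), but counting zeros in a \emph{fixed} window as $N\to\infty$ forces control of the zeros within $O(1)$ of the hard edge $x=0$, i.e.\ on a scale finer than $N$ --- precisely the regime in which the uniform-in-$k$ behavior of the Laguerre zeros has to be invoked and then shown to persist in the passage from the classical Jacobi matrix to the Rabi constraint polynomial.
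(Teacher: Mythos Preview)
Your reduction is correct and matches the paper exactly: one counts zeros of $x\mapsto P_N(x,\Delta^2)$ in a fixed window $[0,t]$, and the density statement follows from the asymptotic count by subtracting thresholds. You have also correctly identified the real obstacle, namely that weak convergence of the empirical measure is useless here because only $\asymp\sqrt N$ of the $N$ zeros lie in $[0,t]$.

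Where your proposal diverges from the paper is in how this obstacle is overcome, and here your outline is too vague to constitute a proof. The idea of matching $P_N(\cdot,\Delta^2)$ to $L_N^{(\alpha)}$ for some $\alpha=\alpha(\Delta)$ is speculative: the perturbation coming from $\Delta$ shifts the $i$-th diagonal entry of the Jacobi matrix by $-\Delta^2/i$, which is not a constant shift and so does not correspond to a change of Laguerre parameter. Your alternative suggestion of ``sandwiching via interlacing/Sturm comparison'' points in the right direction but omits the mechanism that makes it work. (A minor correction: the zeros are not all positive once $\Delta\ge 1$; exactly $N-\lfloor\Delta\rfloor$ of them are.)

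The paper's argument is entirely elementary matrix perturbation and avoids Bessel asymptotics altogether. The key structural fact is that the lower $(N-m)\times(N-m)$ principal minor of the unperturbed Jacobi matrix $M_N$ is \emph{exactly} $M_{N-m}$, whose characteristic polynomial is $L_{N-m}$. Restricting the perturbation $-\Delta^2 D_N^{-1}$ to those rows gives a diagonal matrix of operator norm $\le \Delta^2/(m+1)$. Weyl's inequality then pins the eigenvalues $\beta_j$ of the minor within $\Delta^2/(m+1)$ of the Laguerre zeros $\lambda_{N-m,j}$, and Cauchy interlacing squeezes $\alpha_k(\Delta^2)$ between $\beta_{k-m}$ and $\beta_k$. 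Choosing $m=m(N)\to\infty$ with $m=o(\sqrt N)$ makes the perturbation vanish while keeping $\sqrt{N-m}\sim\sqrt N$, and Gawronski's theorem on the count of Laguerre zeros in $[0,t]$ (which you would recover from your Bessel argument, but which is quoted directly) finishes the job. So the comparison you gesture at is realized not by asymptotics of recursion coefficients but by a two-line combination of Cauchy interlacing and Weyl's lemma, exploiting the self-similarity of $M_N$ under deletion of its top rows.
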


It is believed that for a given parameter pair $(g,\Delta)$, ($g>0$) there are only finitely many Juddian eigenvalues, though this is not proven. Until this time, there were no known examples of parameters where there was more than one Juddian eigenvalue. 
We show that these do exist:
 
 
\begin{thm}\label{thm:Juddv1m}
Fix $m\geq 1$. There are infinitely many $N$'s for which there are parameters  $(g_N,\Delta_N)$ (all distinct) where the eigenvalue spectrum of the Rabi Hamiltonian contains the two Juddian eigenvalues $m-g_N^2$ and $N-g_N^2$.
\end{thm}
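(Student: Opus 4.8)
The plan is to reduce to a common zero of Ku\'s's constraint polynomials and to produce one by an intermediate value argument along the level-$m$ Juddian curve, the required sign change being forced by the distribution and interlacing of the zeros of Laguerre polynomials.

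Recall that $n-g^2$ is a Juddian eigenvalue of $H_{g,\Delta}$ iff $P_n((2g)^2,\Delta^2)=0$. First I record the role of Laguerre polynomials at $\Delta=0$: since $\sigma_x$ commutes with $H_{g,0}$ and a displacement diagonalizes each $\sigma_x$-sector, every $n-g^2$ is a doubly degenerate exceptional eigenvalue for all $g$, so $P_n(x,0)\equiv 0$; writing $P_n(x,\mu)=\mu\,p_n(x,\mu)$, the first-order splitting of the level-$n$ doublet as $\Delta$ is turned on equals $2\Delta e^{-2g^2}|L_n(4g^2)|$ (via the displacement matrix element $\langle n|e^{2g(\adag-\alow)}|n\rangle=e^{-2g^2}L_n(4g^2)$), whence $p_n(x,0)=c_nL_n(x)$ with $c_n\neq0$. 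In particular the real curve $\{p_m=0\}$ meets the $g$-axis precisely at the points $\bigl(\tfrac12\sqrt{z_{m,k}},0\bigr)$, $k=1,\dots,m$, where $0<z_{m,1}<\cdots<z_{m,m}$ are the (simple) zeros of $L_m$, and at each it has a single branch $\beta$ running into $\{\Delta>0\}$ as a (piecewise-)analytic arc. The identity $p_n(x,0)=c_nL_n(x)$ should already be available from the analysis behind Theorem~\ref{thm:densityJudd}, or follows from a short computation on the three-term recurrence for $P_n$.

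Fix $m$ and such a branch $\beta$. The proof of Theorem~\ref{thm:densityJudd} should furnish the sign asymptotics $\operatorname{sign}P_N((2g)^2,\Delta^2)=\operatorname{sign}L_N(4g^2)$ as $N\to\infty$, uniformly for $(g,\Delta)$ in a compact set with $\Delta$ bounded away from $0$ and from a discrete exceptional set (the same input underlies the counting asymptotics there). Choose $P_0,P_1$ on $\beta$ in the open quadrant, at two distinct $g$-coordinates $g_0\neq g_1$ and with $\Delta$-coordinates in the good range — possible since the $g$-coordinate is non-constant along $\beta$ and the exceptional $\Delta$'s are discrete — let $\beta'$ be the subarc between them and $\psi_N:=P_N\circ\beta'$. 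Any interior zero of $\psi_N$ yields parameters $(g_N,\Delta_N)$ with $g_N,\Delta_N>0$ lying on both $\{P_m=0\}$ and $\{P_N=0\}$, i.e.\ for which $m-g_N^2$ and $N-g_N^2$ (distinct, since $N\neq m$) are both Juddian. By the sign asymptotics, $\operatorname{sign}\psi_N(P_0)\cdot\operatorname{sign}\psi_N(P_1)=(-1)^{r_N}$, where $r_N$ is the number of zeros of $L_N$ in the fixed interval $\bigl(4\min(g_0,g_1)^2,\,4\max(g_0,g_1)^2\bigr)$. Here the fine structure of Laguerre zeros enters: $r_N\to\infty$ (the bulk density of the zeros of $L_N$ near a fixed $x>0$ is $\tfrac1{\pi}\sqrt{N/x}\,(1+o(1))$), while the strict interlacing of the zeros of $L_N$ and $L_{N+1}$ forces $|r_{N+1}-r_N|\le 1$; hence $r_N$ is odd for infinitely many $N$, and for each such $N$ the intermediate value theorem gives our common Juddian point.

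For the distinctness clause one takes, for each admissible $N$, the zero of $\psi_N$ closest to the $\bigl(\tfrac12\sqrt{z_{m,k}},0\bigr)$-end of $\beta$ and checks, using the quantitative form of the Laguerre approximation, that it tends to that end as $N\to\infty$, so that no parameter pair recurs infinitely often. The main obstacle is precisely the analytic input of the third paragraph: establishing the sign asymptotics for $P_N$ with enough uniformity in $\Delta$, and handling the branch $\beta$ cleanly (its extent, the absence of vertical components, and possible passage through singular points of $\{p_m=0\}$). Once those are in place, the positive density and the interlacing of the zeros of $L_N$ make the sign change — hence the double Juddian point — unavoidable.
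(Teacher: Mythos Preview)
Your proposal contains a fundamental error at the outset: you claim $P_n(x,0)\equiv 0$ based on the physical degeneracy at $\Delta=0$, and then factor $P_n(x,\mu)=\mu\,p_n(x,\mu)$. This is false. The paper's Lemma~\ref{prop:Laguerre} (which you even allude to) gives $P_N(x,0)=(-1)^N(N!)^2 L_N(x)$, and already $P_1(X,0)=X-1$ from the explicit formula $P_1(X,Y)=X+Y-1$. The equivalence ``$n-g^2$ is Juddian iff $P_n((2g)^2,\Delta^2)=0$'' is derived via Braak's $G_\pm$ and presupposes $\Delta>0$; at $\Delta=0$ the pole structure of $G_\pm$ collapses and the algebraic polynomial $P_n$ need not (and does not) vanish identically. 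Consequently your factorization, the object $p_n$, the first-order splitting computation, and the branches ``of $\{p_m=0\}$ emanating from zeros of $L_m$ on the $g$-axis'' do not exist as described.

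If one repairs this by working directly with the zero locus $Z_m=\{P_m=0\}$ (whose branches in the first quadrant do join $(\lambda_{m,k},0)$ to $(0,k^2)$ --- this is the paper's Lemma~\ref{lem:first branch}), the remaining strategy still rests on an unproven ``sign asymptotic'' $\sign P_N((2g)^2,\Delta^2)=\sign L_N(4g^2)$ uniformly over compacta in $\{\Delta>0\}$, which you yourself flag as the main obstacle. The paper does not establish this; its Theorem~\ref{thm:consequence of interlacing} localizes the zeros of $x\mapsto P_N(x,y)$ between shifted Laguerre zeros, which is related but weaker. The paper's proof bypasses all such analytic estimates with a direct topological crossing: the branch $Z_{m,1}$ connects $(0,1)$ to $(\lambda_{m,1},0)$, while for large $N$ any branch $Z_{N,i}$ with $i\ge 2$ and $\lambda_{N,i}<\lambda_{m,1}$ connects $(0,i^2)$ (above $(0,1)$) to $(\lambda_{N,i},0)$ (left of $(\lambda_{m,1},0)$), hence must cross $Z_{m,1}$. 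Gawronski's theorem supplies $\sim\tfrac{2}{\pi}\sqrt{\lambda_{m,1}N}$ such branches, and a growth-rate comparison along $N_i=100^i$ yields infinitely many distinct intersection points. Your IVT-plus-Laguerre-interlacing idea is in the right spirit, but once the branch structure of both $Z_m$ and $Z_N$ is in hand the crossing is forced with no sign estimates required.
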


For instance, the first $n$ for which there is some $(g,\Delta)$ with both $1-g^2$ and $n-g^2$ as Juddian eigenvalues is $n=8$, and numerics indicate that all $8\leq n\leq 30$ have this property.
 
While we showed that there are infinitely many distinct parameters $(g,\Delta)$ having two Juddian eigenvalues, it seems unlikely that there is any $(g,\Delta)$ having three such eigenvalues.

\subsection{About the proofs}
 
A key ingredient in both Theorems~\ref{thm:densityJudd} and \ref{thm:Juddv1m} is that the restriction of the constraint polynomial $P_N(x,y)$ to the $x$-axis, that is the  un-physical case when the level splitting is zero,  is given by 
 \[
 P_N(x,0) = (-1)^N(N!)^2 L_N(x)
 \] 
where $L_N(x)=\sum_{m=0}^N \binom{N}{m}\frac{(-x)^m}{m!}$ is the classical Laguerre polynomial \cite{LFTB, KRBW}.   
When $y>0$, we use a determinantal representation due to Kimoto, Reyes-Bustos and Wakayama \cite{KRBW} to write $P_N(x,y)$ in terms of the characteristic polynomial of a certain  symmetric tri-diagonal matrix, and then apply perturbation theory to show that for $N$ sufficiently large, and for $m=m(N)$ tending to infinity with $N$ while $m=o(\sqrt{N})$, the zeros $\alpha_1(y) <\alpha_2(y) < \dots< \alpha_N(y)$ of $x\mapsto P_N(x,y)$ satisfy
\[
  \lambda_{N-m,k-m}-\frac{y}{m+1}\leq  \alpha_k(y)  < \lambda_{N-m,k},\quad k= m+1,\dots , N-m
\]
where $\lambda_{N-m,1}<\dots <\lambda_{N-m,N-m}$ are the zeros of the Laguerre polynomial $L_{N-m}(x)$.  
We then appeal to the known distribution of ``small'' zeros of $L_n(x)$, namely that for fixed $z>0$ and $n\to  \infty$,  
there are asymptotically $\frac {2}{\pi} \sqrt{nz}$ zeros $\lambda_{n,k}\leq z$ \cite[Theorem 2]{Gawronski}, to deduce the density conjecture in the form 
of Theorem~\ref{thm:densityJudd}. 
These results about the distribution of zeros of the Laguerre polynomial are also instrumental in finding two Juddian eigenvalues for 
Theorem~\ref{thm:Juddv1m}.

\section{PRELIMINARY RESULTS}
This section will introduce constraint polynomials, tri-diagonal matrix, and results for the proofs in Sections \ref{section for theorem 1} and \ref{section for theorem 2}.
\subsection{Background on constraint polynomials}
Eigenvalues $\eigen$ where $z=\eigen+g^2$ are not an integer are called the ``regular'' spectrum. Braak~\cite{Braak} showed that the regular part of the spectrum of the Rabi model is given in terms of the zeros of $G(z)=G_+(z)G_-(z)$ where $z = \eigen+g^2$ is the shifted spectral parameter, and
\begin{equation*}
G_\pm(z) = \sum_{n=0}^\infty K_n(z;g,\Delta) g^n \left( 1 \mp \frac{\Delta}{z-n} \right) ,
\end{equation*}
where the functions $K_n(z;g,\Delta)$ satisfy a recursion
\begin{equation*}
\begin{split}
nK_n(z;g,\Delta) &= f_{n-1}(z;g,\Delta)K_{n-1}(z;g,\Delta)-K_{n-2}(z;g,\Delta),
\\
f_m(z) &= 2g+\frac 1{2g}\left( \frac{\Delta^2}{z-m}+m-z \right),
\end{split}
\end{equation*}
with initial conditions $K_0\equiv 1$, $K_1 = f_0(z)$.

The Juddian eigenvalues are of the form $\eigen=n-g^2$, where $n$ is an integer satisfying
$$K_n(n;g,\Delta) =0.$$
In this case the possible poles of $G_\pm(z)$ at $z=n$ are cancelled, as in Figure~\ref{figsub:GfunctionsJudd}, and the spectrum is doubly degenerate, that is the corresponding eigenspace has dimension two.
\begin{figure}[ht]
\begin{subfigure}[b]{0.45\textwidth}
\includegraphics[height=50mm , width=\textwidth]{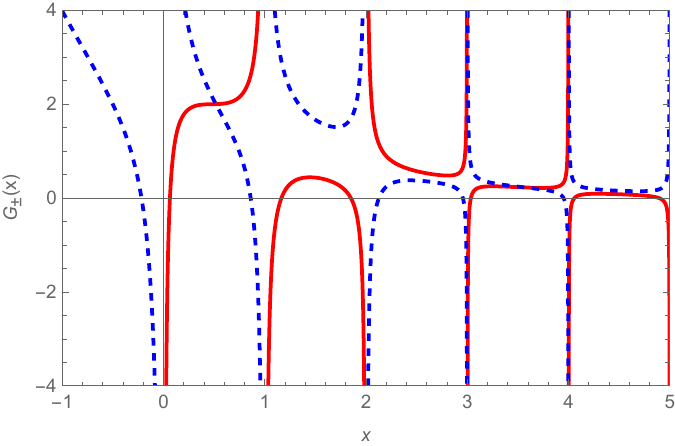}
\caption{   $g=0.7$ and $\Delta=0.4$ }\label{figsub:GBraak}
\end{subfigure}
\hfill
\begin{subfigure}[b]{0.45\textwidth}
\includegraphics[height=50mm, width=\textwidth ]{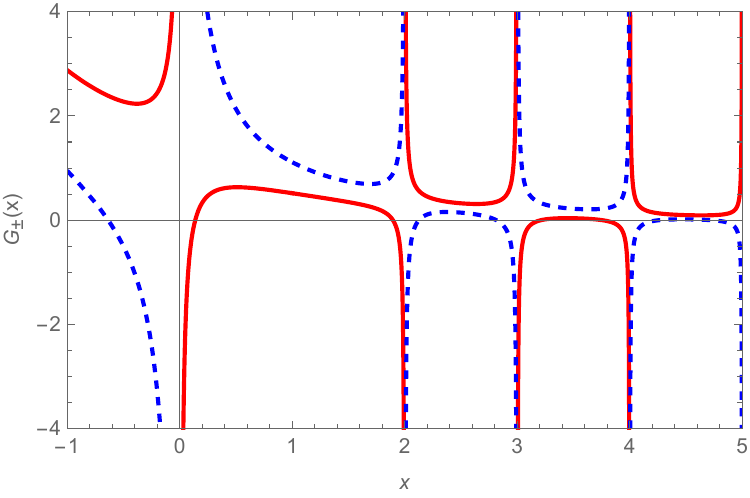}
\caption{ $g=1/ \sqrt{8}$ and  $\Delta=1/\sqrt{2}$.   }   \label{figsub:GfunctionsJudd}
\end{subfigure}
\caption{The functions $G^+(z)$ (red) and $G^-(z)$ (blue, dashed) for $g=0.7$ and $\Delta=0.4$ (\ref{figsub:GBraak}),
and for  $g=1/ \sqrt{8}$, $\Delta=1/\sqrt{2}$ (\ref{figsub:GfunctionsJudd}), when the pole at $x=1$ is cancelled and $\eigen= 1-g^2$ is an exceptional (Juddian) eigenvalue.  }
\label{fig:Gfunctions}
\end{figure}


We write the condition $K_n(n;g,\Delta) =0$ in the equivalent form
$$ P_n( (2g)^2,\Delta^2)=0,$$
where
\[
P_n( (2g)^2,\Delta^2):= (n!)^2 (2g)^n K_n(n;g,\Delta)
\]
is a polynomial in $X=(2g)^2$ and $Y=\Delta^2$, which turns out to have degree $n$. 
These constraint polynomials have received a lot of attention recently \cite{Kus, LFTB, KRBW, RBW} and we will need several of their properties.

The first few are
\[
P_1(X,Y) = X+Y-1, \qquad P_2(X,Y)=2 X^2+3 X Y+Y^2-8 X-5 Y+4,
\]
\begin{multline*}
P_3(X,Y)=6 X^3+11 X^2 Y+6 X Y^2 +Y^3
\\ -54 X^2-58 X Y-14 Y^2  +108 X  +49 Y-36,
\end{multline*}
\begin{multline*}
P_4(X,Y)=24 X^4+50 X^3 Y +35 X^2 Y^2+10 X Y^3 +Y^4
\\
-384 X^3-542 X^2 Y -230 X Y^2-30 Y^3
\\+1728 X^2     +1444 X Y+273 Y^2 -2304 X-820 Y+576.
\end{multline*}

There is a recursive procedure to calculate these polynomials, due to Ku\'{s} \cite{Kus}.
Define polynomials $P^{(n)}_k(X,Y)$, $k=0,1,\dots, n$, by the recursion
\begin{multline}\label{Kus recurrence}
P^{(n)}_k(X,Y) = (kX+Y-k^2)\cdot  P^{(n)}_{k-1}(X,Y)
\\
-k(k-1)(n-k+1) \cdot X \cdot P^{(n)}_{k-2}(X,Y)
\end{multline}
for $  2\leq k\leq n$, with initial conditions
\[
P_0^{(n)}\equiv 1, \quad  P_1^{(n)}(X,Y)=X+Y-1.
\]
Then Ku\'{s} shows that
 $$P_n(X,Y) = P^{(n)}_n(X,Y)  .$$

From the recursion \eqref{Kus recurrence} we see that $P^{(n)}_k(X,Y)$ has integer coefficients,  has degree $k$,  and that the coefficient of  $X^k$ is $k!$, and of $Y^k$ is $1$.

\begin{lem}\label{lem:Intersect with Y axis}
We have
$$
P_N(0,Y) = \prod_{m=1}^N(Y-m^2).
$$
Hence the solutions of  $P_N(0,Y)=0$ are $Y=1,2^2,\dots,N^2$.
 \end{lem}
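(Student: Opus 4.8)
The plan is to substitute $X=0$ directly into the Ku\'{s} recursion \eqref{Kus recurrence} and observe that it collapses to a trivial first-order recursion. Indeed, setting $X=0$ in \eqref{Kus recurrence} annihilates the term $-k(k-1)(n-k+1)\cdot X\cdot P^{(n)}_{k-2}(X,Y)$, so for $2\le k\le N$ we get
\[
P^{(N)}_k(0,Y) = (Y-k^2)\, P^{(N)}_{k-1}(0,Y).
\]
This also holds for $k=1$ by inspection, since $P^{(N)}_1(0,Y)=Y-1=(Y-1^2)\cdot P^{(N)}_0(0,Y)$ with $P^{(N)}_0\equiv 1$.

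From here I would simply induct on $k$: starting from $P^{(N)}_0(0,Y)=1$ and telescoping the one-step relation above yields $P^{(N)}_k(0,Y)=\prod_{m=1}^{k}(Y-m^2)$ for all $0\le k\le N$. Taking $k=N$ and using Ku\'{s}'s identity $P_N(X,Y)=P^{(N)}_N(X,Y)$ gives
\[
P_N(0,Y) = \prod_{m=1}^N(Y-m^2),
\]
and the statement about the roots $Y=1,2^2,\dots,N^2$ is then immediate.

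There is no substantial obstacle here; the only point requiring any care is the bookkeeping at the base of the induction (checking that the formula is consistent with the given initial data $P^{(N)}_0$ and $P^{(N)}_1$), after which the result is a one-line telescoping computation. The content of the lemma is really just the observation that the ``interaction'' term in the Ku\'{s} recursion carries a factor of $X$, so the $Y$-axis slice decouples completely.
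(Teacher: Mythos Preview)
Your argument is correct and is essentially identical to the paper's own proof: both specialize the Ku\'{s} recursion \eqref{Kus recurrence} to $X=0$, note that the $P^{(n)}_{k-2}$ term drops out, and then induct/telescope from the initial data to obtain $P^{(n)}_n(0,Y)=\prod_{k=1}^n(Y-k^2)$.
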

\begin{proof}
We use Ku\'{s}'s recursion \eqref{Kus recurrence} specialized to $X=0$, which gives
\[
P^{(n)}_k(0,Y) = (Y-k^2)\cdot  P^{(n)}_{k-1}(0,Y)
\]
with initial conditions $P_0^{(n)}(0,Y)\equiv 1$,  $P_1^{(n)}(0,Y)=Y-1$ and hence by induction on $k$ we obtain
\[
P_n(0,Y)=P^{(n)}_n(0,Y) = \prod_{k=1}^n(Y-k^2) .
\]
\end{proof}

\begin{lem}\label{prop:Laguerre}
The restriction of the constraint polynomial to the $x$-axis is a multiple of the classic Laguerre polynomial:
\[
  P_N(x,0)    = (-1)^N (N!)^2 L_N(x).
\]
\end{lem}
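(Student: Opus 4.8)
The plan is to compute the restriction $P_N(x,0)$ directly from Braak's recursion for the coefficients $K_n(z;g,\Delta)$, rather than from Ku\'{s}'s recursion \eqref{Kus recurrence}. Write $x=(2g)^2$ and recall that, by definition, $P_N(x,0)=(N!)^2(2g)^N K_N(N;g,0)$. When $\Delta=0$ one has $f_m(z)=2g+\frac{1}{2g}(m-z)$, so at $z=N$ this is $f_{k-1}(N)=2g+\frac{1}{2g}(k-1-N)$. Setting
\[
b_k:=(2g)^k\,K_k(N;g,0),
\]
and multiplying the recursion $k K_k=f_{k-1}(N)K_{k-1}-K_{k-2}$ (evaluated throughout at $z=N$, $\Delta=0$) by $(2g)^k$, one obtains the purely polynomial three-term recursion
\[
k\,b_k=(x+k-1-N)\,b_{k-1}-x\,b_{k-2},\qquad b_0=1,\quad b_1=x-N,
\]
which holds for every $k\ge 1$ once we set $b_{-1}=0$. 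In particular $b_k\in\Q[x]$ and $P_N(x,0)=(N!)^2 b_N$, so it remains to identify the single polynomial $b_N$.

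Next I would pass to the generating function $F(t)=\sum_{k\ge 0}b_k t^k\in\Q[x][[t]]$. Multiplying the recursion by $t^k$ and summing over $k\ge 1$, the left side becomes $tF'$ and the right side becomes $(x-N)tF+t^2F'-xt^2F$, so that $F$ satisfies the linear ODE
\[
(1-t)\,F'=\bigl(x(1-t)-N\bigr)F,\qquad F(0)=b_0=1.
\]
Separating variables gives $\log F=xt+N\log(1-t)$, i.e.\ $F(t)=(1-t)^N e^{xt}$.

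It then only remains to extract the coefficient of $t^N$: expanding the product and reindexing,
\[
b_N=[t^N]\Bigl((1-t)^N e^{xt}\Bigr)=\sum_{a=0}^N\binom{N}{a}\frac{(-1)^a\,x^{N-a}}{(N-a)!}=(-1)^N\sum_{m=0}^N\binom{N}{m}\frac{(-x)^m}{m!}=(-1)^N L_N(x),
\]
and hence $P_N(x,0)=(N!)^2 b_N=(-1)^N(N!)^2 L_N(x)$, as claimed. (Equivalently, one may guess and verify by induction on $k$ the closed form $b_k=\frac{1}{k!}\sum_{j=0}^{k}\binom{k}{j}(-1)^{k-j}\frac{N!}{(N-k+j)!}\,x^j$, which collapses to $(-1)^N L_N(x)$ at $k=N$; the generating function just streamlines the bookkeeping.) The one point that needs a little care is the bottom of the recursion, $k=0,1$: one has to make sure the convention $b_{-1}=0$ is compatible with the three-term recursion, so that summation over all $k$ — and therefore the ODE — is a valid identity in $\Q[x][[t]]$; everything else is routine. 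I note, finally, that starting instead from Ku\'{s}'s recursion at $Y=0$ would be less efficient, since the auxiliary polynomials $P^{(N)}_k(X,0)$ agree with $(k!)^2 b_k$ only at $k=N$ and otherwise satisfy a messier recursion — which is exactly why Braak's $K_n$-recursion is the convenient starting point here.
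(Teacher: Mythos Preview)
Your argument is correct. The recursion for $b_k$, the ODE for the generating function $F(t)=(1-t)^N e^{xt}$, and the coefficient extraction all check out; the initial conditions $b_0=1$, $b_1=x-N$ are consistent with the convention $b_{-1}=0$, so the summation leading to the ODE is valid as an identity in $\Q[x][[t]]$.

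As for comparison: the paper does not actually prove this lemma. It simply records the identity and cites \cite[\S 3.2, eq.~(13)]{LFTB} for the observation and \cite[Theorem~3.18]{KRBW} for a proof. So you are supplying a self-contained argument where the paper defers to the literature. Your route through Braak's $K_n$-recursion and the generating function $(1-t)^N e^{xt}$ is short and elementary, and it sidesteps the determinantal machinery of \cite{KRBW}; the trade-off is that it is specific to the symmetric case $\Delta=0$, whereas the cited result in \cite{KRBW} sits inside a broader framework that also handles the asymmetric model. Your closing remark about Ku\'{s}'s recursion is also accurate: already $P_1^{(N)}(X,0)=X-1$ versus $(1!)^2 b_1=x-N$ shows the auxiliary sequences differ for $k<N$, so Braak's recursion is indeed the cleaner entry point for this particular identity.
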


Recall the Laguerre polynomials are given by
\begin{equation*}
L_N(x)=\sum_{m=0}^N  (-1)^m \binom{N}{m} \frac{x^m}{m!}  .
\end{equation*}
The relation of $P_N(x,0)$ with the Laguerre polynomials appears in \cite[\S 3.2, eq (13)]{LFTB},  
a proof is given in \cite[Theorem 3.18]{KRBW}. Lemma~\ref{lem:Intersect with Y axis} is given in the course of the proof of Lemma 3.10 of \cite{KRBW}.

The Laguerre polynomial $L_N(x)$ has $N$ zeros, which are all real and positive, denoted by
 $$0<\lambda_{N,1}<\lambda_{N,2}<\dots<\lambda_{N,N}.$$
The distribution of the zeros of Laguerre polynomials plays a significant role in the proofs of Theorems  \ref{thm:densityJudd} and \ref{thm:Juddv1m}. The following theorem is due to Gawronski:
\begin{thm}\emph{\cite[Theorem 2]{Gawronski}}\label{Gawronski thm}
Let $x>0$. Then
\begin{equation*}
 \lim_{N\to \infty}\frac{1}{\sqrt{N}}\cdot \#\{k=1,\ldots, N\, : \lambda_{N,k}\leq x\}= \frac{2}{\pi}\cdot \sqrt{x}.   
\end{equation*}
\end{thm}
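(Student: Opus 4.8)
The final statement to prove is Theorem~\ref{Gawronski thm} (Gawronski's result on the counting function of the small zeros of $L_N$). Let me sketch a proof plan.

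\medskip

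The plan is to prove the asymptotic $\#\{k : \lambda_{N,k}\le x\}\sim \tfrac{2}{\pi}\sqrt{Nx}$ by combining a scaling (Bessel-type) limit for the Laguerre polynomials near the origin with a standard interlacing/counting argument. First I would recall the classical Mehler--Heine-type asymptotic: for fixed $t>0$, as $N\to\infty$,
\[
\lim_{N\to\infty} N^{-\alpha} L_N^{(\alpha)}\!\left(\frac{t}{N}\right) = t^{-\alpha/2} J_\alpha\!\left(2\sqrt{t}\right),
\]
uniformly for $t$ in compact subsets of $(0,\infty)$; for our case $\alpha=0$ this reads $L_N(t/N)\to J_0(2\sqrt t)$. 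This is where I expect to invoke a known result (Szeg\H{o}'s book, Theorem 8.1.3) rather than reprove it; alternatively one can get it directly from the integral/generating-function representation of $L_N$. The upshot is that the rescaled zeros $N\lambda_{N,k}$ converge, for each fixed $k$, to $j_{0,k}^2/4$, where $j_{0,1}<j_{0,2}<\cdots$ are the positive zeros of $J_0$. Consequently, for fixed $z>0$ the number of zeros $\lambda_{N,k}\le z/N$ tends to $\#\{k: j_{0,k}^2/4\le z\}=\#\{k:j_{0,k}\le 2\sqrt z\}$.

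\medskip

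The second ingredient is the asymptotics of the Bessel zeros: $j_{0,k}=\pi(k-\tfrac14)+O(1/k)$, so $\#\{k:j_{0,k}\le T\}\sim T/\pi$ as $T\to\infty$. Hence $\#\{k:\lambda_{N,k}\le z/N\}\sim \tfrac{2}{\pi}\sqrt z$ as first $N\to\infty$ and then $z\to\infty$. The remaining task is a diagonal/uniformity argument: to replace the fixed threshold $x$ in the theorem (which corresponds to $z=Nx\to\infty$) one needs the Mehler--Heine limit to hold with enough uniformity as $k$ and $N$ grow together, or, more robustly, one interpolates. Concretely, I would argue as follows. Fix $x>0$ and an auxiliary large parameter $T$; write $z=Nx$. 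Split the count at the zero index $k_0$ with $j_{0,k_0}\approx T$. For indices $k\le k_0$ the Mehler--Heine regime (zeros near $0$, scale $1/N$) controls $\lambda_{N,k}$, giving $\approx T/\pi = \tfrac2\pi\sqrt z\cdot(\text{correction})$ once $T$ is taken large; the point is that $T/\pi$ and $\tfrac{2}{\pi}\sqrt z$ must be reconciled, which forces the correct normalization to come instead from the bulk. So the cleaner route is:

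\medskip

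Use the known limiting (Mar\-chenko--Pastur) distribution of the \emph{bulk} zeros: the empirical measure $\frac1N\sum_k \delta_{\lambda_{N,k}/N}$ converges weakly to the Marchenko--Pastur law with density $\frac{1}{2\pi}\sqrt{\frac{4-s}{s}}$ on $(0,4)$. Then $\frac1N\#\{k:\lambda_{N,k}\le x\} = \frac1N\#\{k: \lambda_{N,k}/N \le x/N\}$, and since $x/N\to0$ we are looking at the mass of the limiting law near the left edge $0$. Near $s=0$ the Marchenko--Pastur density behaves like $\frac{1}{\pi\sqrt s}$, so $\int_0^{u}\frac{ds}{\pi\sqrt s}=\frac{2\sqrt u}{\pi}$; but because $x/N\to0$ at the same rate as $1/N$, this weak-convergence statement is not by itself quantitative enough and must be upgraded. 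The honest fix is precisely the local (hard-edge) scaling limit already mentioned: the correctly scaled point process $\{N\lambda_{N,k}\}_k$ converges to the Bessel point process with kernel $\mathbb K_{\mathrm{Bessel}}$, whose one-point function is $\rho(t)=\frac{1}{4}\big(J_0(\sqrt t)^2+J_1(\sqrt t)^2\big)\cdot(\text{normalization})$, and whose integrated intensity over $[0,z]$ is asymptotic to $\frac{2}{\pi}\sqrt z$ as $z\to\infty$ (this matches the Bessel-zero count $T/\pi$ with $T=2\sqrt z$). Combining the convergence of the Bessel process with this large-$z$ asymptotic of its intensity, via a standard $\varepsilon$-$T$ interchange-of-limits argument (choose $T$ large so the Bessel intensity is within $\varepsilon$ of $\frac{2}{\pi}\sqrt{z}$-consistency on $[0,T^2/4]$, then let $N\to\infty$), yields
\[
\lim_{N\to\infty}\frac{1}{\sqrt N}\#\{k:\lambda_{N,k}\le x\} = \frac{2}{\pi}\sqrt x.
\]

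\medskip

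The main obstacle is exactly this interchange of limits: the theorem counts zeros up to a \emph{fixed} $x$, which on the natural microscopic scale $1/N$ is a \emph{growing} window $[0,Nx]$, so neither the fixed-$k$ Mehler--Heine statement nor the global weak convergence of the empirical measure is directly sufficient; one needs either a uniform Mehler--Heine estimate valid for $t$ up to size $o(N)$ (equivalently, for zero-indices $k=o(\sqrt N)$ — note this is precisely the range $m=o(\sqrt N)$ appearing elsewhere in the paper), or a quantitative hard-edge bound. Since Theorem~\ref{Gawronski thm} is quoted from \cite{Gawronski}, the cleanest presentation is simply to cite it; if a self-contained proof is wanted, I would carry out the uniform Mehler--Heine estimate using the contour-integral representation $L_N(t/N)=\frac{1}{2\pi i}\oint e^{t/(N)\cdot \frac{w}{1-w}}\frac{dw}{(1-w)w^{N+1}}$ (or the Perron/steepest-descent analysis), extract $\lambda_{N,k}=\frac{j_{0,k}^2}{4N}(1+o(1))$ uniformly for $k\le K(N)$ with $K(N)\to\infty$ and $K(N)=o(\sqrt N)$, and then feed in $j_{0,k}=\pi(k-\tfrac14)+o(1)$ to count.
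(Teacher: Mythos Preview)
The paper does not prove this theorem at all; it is simply quoted from \cite{Gawronski}, with no argument given. So there is no in-paper proof to compare against, and your proposal already goes well beyond what the paper does. You yourself note this (``the cleanest presentation is simply to cite it''), and that is exactly the paper's choice.

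As for the sketch itself: you correctly isolate the right ingredients (Mehler--Heine, Bessel-zero asymptotics, the hard-edge/bulk dichotomy) and the real obstacle (fixed $x$ is a \emph{growing} microscopic window $[0,Nx]$, so neither fixed-$k$ Mehler--Heine nor bulk weak convergence suffices on its own). The Bessel-process route with an $\varepsilon$--$T$ interchange is the right shape. However, your final paragraph has a concrete gap: the claimed uniformity range $k\le K(N)$ with $K(N)=o(\sqrt N)$ is too small. Since the count is $\sim\tfrac{2}{\pi}\sqrt{Nx}$, the relevant zero indices run up to order $\sqrt N$, not $o(\sqrt N)$; with $K(N)=o(\sqrt N)$ the largest controlled zero satisfies $\lambda_{N,K(N)}\sim \pi^2 K(N)^2/(4N)=o(1)$, so you only certify $o(\sqrt N)$ zeros in $(0,x]$ and cannot recover the main term, nor get any upper bound. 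The analogy you draw with the paper's parameter $m=o(\sqrt N)$ is misleading: there $m$ counts deleted rows in a Cauchy interlacing step, not zero indices. To close your argument you would need $\lambda_{N,k}\sim \pi^2 k^2/(4N)$ uniformly for $k$ up to any fixed multiple of $\sqrt N$ (equivalently, uniform Bessel-type asymptotics for $L_N$ on intervals $[0,\delta N]$); that input is available in the literature, but it is essentially the substance of what Gawronski proves rather than a shortcut around it.
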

\subsection{A matrix representation}

Kimoto, Reyes-Bustos and Wakayama \cite{KRBW} gave a representation of the constraint polynomials as the characteristic polynomial of a symmetric tri-diagonal matrix which we will use.
Denote by
\[
\tridiag_N\begin{pmatrix} a_i&b_i\\c_i&\end{pmatrix} = \begin{pmatrix} a_1&b_1&0&0&\dots \\c_1&a_2&b_2&0&\dots
\\ 0&c_2&a_3& b_3&0 \\  \vdots
\end{pmatrix}
\]
the tri-diagonal $N\times N$ matrix with diagonal entries $a_1,\dots, a_N$, upper off-diagonal entries $b_1,\dots, b_{N-1}$, and lower off-diagonal entries $c_1,\dots$,  $c_{N-1}$. We note that
\begin{equation}\label{det replace offdiag}
\det \tridiag_N\begin{pmatrix} a_i&b_i\\c_i&\end{pmatrix} = \det \tridiag_N\begin{pmatrix} a_i&b'_i\\c'_i&\end{pmatrix}
\end{equation}
whenever $b_ic_i=b'_ic'_i$.

Let
\begin{equation*}
\begin{split}
M_N & =    \tridiag_N \begin{pmatrix} 2(N-i)+1 &N-i \\N-i &\end{pmatrix}
\\
&= \begin{pmatrix} 2N-1&N-1& 0 &  &\\N-1&2N-3& N-2 &0 \\ 0&N-2& 2N-5& N-3 &0&  \\ \vdots
\\ &  & 0 & 1&1
\end{pmatrix}.
\end{split}
\end{equation*}	

\begin{lem}\label{lem:tridiag}
Let  $D_N$ be the diagonal matrix with entries $1,2,\dots, N$. Then
\[
P_N (x,y) = N!\det\left( xI_N -\left(M_N- yD_N^{-1}\right) \right).
\]
\end{lem}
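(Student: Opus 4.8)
The plan is to connect the Ku\'{s} recursion \eqref{Kus recurrence} with the three-term recurrence satisfied by the leading principal minors of the tri-diagonal matrix $xI_N - (M_N - yD_N^{-1})$. Recall that if $T = \tridiag_N\begin{pmatrix} a_i & b_i \\ c_i & \end{pmatrix}$, and $D_k(x)$ denotes the determinant of the top-left $k\times k$ block of $xI - T$, then $D_k(x) = (x-a_k)D_{k-1}(x) - b_{k-1}c_{k-1}D_{k-2}(x)$, with $D_0 = 1$, $D_{-1}=0$. So I would first write out this recurrence for the specific matrix $M_N - yD_N^{-1}$. Its $k$-th diagonal entry is $(2(N-k)+1) - y/k$, and the product of the off-diagonal entries in position $k-1$ is $(N-k+1)^2$ by the construction of $M_N$ (using \eqref{det replace offdiag} to see that only the product $b_{k-1}c_{k-1} = (N-(k-1))^2 = (N-k+1)^2$ matters). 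Hence the principal minors $D_k(x)$ satisfy
\[
D_k(x) = \Bigl(x - 2(N-k) - 1 + \frac{y}{k}\Bigr)D_{k-1}(x) - (N-k+1)^2 D_{k-2}(x).
\]

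Next I would perform a change of variables to match this with Ku\'{s}'s recursion. The natural guess, suggested by the substitution $X = (2g)^2 \leftrightarrow x$ and $Y = \Delta^2 \leftrightarrow y$ and by comparing degrees, is to set $Q_k(x,y) := c_k \cdot k^{j} \cdot D_k(x) \cdot (\text{something})$ — more precisely I expect that multiplying $D_k(x)$ by $k!$ (or by the running product $\prod_{j\le k} j$) and possibly rescaling by a sign clears the denominators $y/k$ and turns the leading coefficient of the off-diagonal term $(N-k+1)^2$ into the form $k(k-1)(n-k+1)$ appearing in \eqref{Kus recurrence}. Concretely, I would try $\widetilde D_k(x) := k!\, D_k(x)$: then the recursion becomes
\[
\widetilde D_k(x) = \bigl(k x - k(2(N-k)+1) + y\bigr)\widetilde D_{k-1}(x) - k(k-1)(N-k+1)^2 \widetilde D_{k-2}(x).
\]
Comparing with \eqref{Kus recurrence}, which reads $P^{(n)}_k = (kX + Y - k^2)P^{(n)}_{k-1} - k(k-1)(n-k+1)X P^{(n)}_{k-2}$, I see the off-diagonal coefficients do not literally match (one has $(N-k+1)^2$, the other has $(n-k+1)X$), so a further rescaling $\widetilde D_k \mapsto (\text{product of } (N-j+1)\text{'s})\cdot \widetilde D_k$ together with reconciling the diagonal term $k x - k(2(N-k)+1) + y$ against $k X + Y - k^2$ will be needed. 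This algebraic bookkeeping — finding the exact normalizing factor and verifying that the diagonal terms agree after the substitution — is the main obstacle; it is purely a matter of matching two explicit second-order linear recursions with the same pair of initial conditions, but it requires care with the index shift $k \leftrightarrow N-k$ and with keeping track of all constants.

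Once the normalization is pinned down, the proof concludes by induction on $k$: both sides satisfy the same second-order recursion and the same initial data at $k=0,1$ (here $D_0 = 1$, $D_1(x) = x - (2N-1) + y$, which after normalization should equal $P^{(N)}_1(x,y) = x + y - 1$ up to the affine change of variable in $x$), so they agree for all $k$, and in particular at $k = N$, which gives $P_N(x,y) = P^{(N)}_N(x,y) = N!\det(xI_N - (M_N - yD_N^{-1}))$. An alternative, if the direct recursion matching proves too delicate, would be to verify the identity first at $y=0$ using Lemma~\ref{prop:Laguerre} (checking that $N!\det(xI_N - M_N) = (-1)^N(N!)^2 L_N(x)$, i.e.\ that $M_N$ has the Laguerre zeros as eigenvalues — itself a known tri-diagonal model for Laguerre polynomials after the reflection $k \mapsto N+1-k$), and then treat $-yD_N^{-1}$ as an exact rank-structured perturbation, expanding $\det(xI - M_N + yD_N^{-1})$ in powers of $y$ and matching coefficients with the $Y$-expansion of $P_N(x,Y)$ coming from \eqref{Kus recurrence}; but the recursion-matching route above is cleaner and I would attempt it first.
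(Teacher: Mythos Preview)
There is a genuine gap: the recursion-matching in the variable $x$ that you outline cannot be completed, and the obstacle is structural, not bookkeeping. In the Ku\'{s} recursion \eqref{Kus recurrence} the off-diagonal coefficient $k(k-1)(N-k+1)X$ depends on $X$, so viewed as a three-term recurrence in $X$ it is \emph{not} of the form produced by the leading principal minors of a matrix $xI_N-T$ with $T$ independent of $x$. Concretely, no rescaling $D_k\mapsto c_k D_k$ with $c_k$ constant in $x$ can turn your off-diagonal factor $(N-k+1)^2$ into $k(k-1)(N-k+1)x$. The failure is already visible at the base step: your $D_1(x)=x-(2N-1)+y$, whereas $P_1^{(N)}(x,y)=x+y-1$, and these differ for every $N>1$; there is no single affine change of variable in $x$ that repairs this for all $k$ simultaneously. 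So the intermediate polynomials $D_k$ are simply different from $P_k^{(N)}/k!$, even though the final determinants agree at $k=N$.

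The paper's proof avoids this by quoting \cite[Corollary 3.4]{KRBW}, namely $P_N(x,y)=\det(yI_N+xD_N+S_N)$, which is what one gets by running your recursion-matching idea in the \emph{other} variable: in $Y$ the Ku\'{s} recursion has $Y$-free off-diagonal coefficients, so it \emph{is} the principal-minor recursion of a matrix of the form $YI_N-(\text{something depending on }X)$. The remaining work is then pure matrix algebra: factor out $D_N$ to pass from $yI_N+xD_N$ to $N!\,(xI_N+yD_N^{-1})$, conjugate $D_N^{-1}S_N$ by $D_N^{1/2}$, and finally use the product-of-off-diagonals invariance \eqref{det replace offdiag} to recognise the resulting tri-diagonal matrix as $-M_N$. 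If you want to keep your direct-recursion philosophy, the fix is exactly this: match the Ku\'{s} recursion to the principal minors of $yI_N+xD_N+S_N$ (in $y$), and then perform the determinant manipulations to reach the stated form.
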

\begin{proof}
Let $S_N$ the tri-diagonal symmetric matrix
\[
S_N=\tridiag_N \begin{pmatrix} -i((2N-i)+1) & (N-i)\sqrt{i(i+1)} \\ (N-i)\sqrt{i(i+1)} & \end{pmatrix}.
\]
Then \cite[Corollary 3.4]{KRBW}
\[
P_N^{(N)}(x,y) = \det\left(y I_N+ x D_N +S_N\right)  .
\]
Removing the matrix $D_N$ which has $\det D_N=N!$, so as to write
\[
\det\left(y I_N+ x D_N +S_N\right)   = N! \det \left(x I_N+y D_N^{-1} + D_N^{-1} S_N  \right) ,
\]
replacing $ D_N^{-1}S_N$ by its conjugate
\[
D_N^{-1/2}S_N D_N^{1/2} = \tridiag_N     \begin{pmatrix}-( 2(N-i)+1) &N-i \\N-i &\end{pmatrix},
\]
and replacing the off-diagonal entries by their negatives, we obtain using \eqref{det replace offdiag}
\[
P_N^{(N)}(x,y) = N!\det\left( xI_N -\left(M_N- yD_N^{-1}\right) \right)
\]
with
\[
M_N =      \tridiag_N \begin{pmatrix} 2(N-i)+1 &N-i \\N-i &\end{pmatrix}
\]
as claimed.
\end{proof}

Thus, for fixed $y$, the zeros of $Q_N(x) = P_N(x,y)/N!$ are the eigenvalues of the symmetric tri-diagonal matrix $M_N- y D_N^{-1}$. 

 \begin{lem} \label{lem:distinct roots of Q}
 Fix $y>0$. Then the polynomial $Q_n(x):=P_n(x,y)/n!$ 
 has exactly $n$ real roots $\alpha_1(y)<\alpha_2(y)<\dots <\alpha_n(y)$, all distinct.
 \end{lem}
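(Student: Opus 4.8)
The plan is to use the matrix representation from Lemma~\ref{lem:tridiag}: for fixed $y>0$, the roots of $Q_n(x)=P_n(x,y)/n!$ are exactly the eigenvalues of the $n\times n$ matrix $A:=M_N-yD_N^{-1}$ with $N=n$. Since $M_N$ is a real symmetric tri-diagonal matrix and $D_N^{-1}$ is real diagonal, $A$ is real symmetric, hence all its eigenvalues are real; this already gives that $Q_n$ has $n$ real roots counted with multiplicity. It remains to show they are simple, i.e. that $A$ has $n$ distinct eigenvalues.

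The key point is that $A$ is a Jacobi matrix whose off-diagonal entries are all nonzero. Indeed, from the displayed form of $M_N$, the $(i,i+1)$ and $(i+1,i)$ entries of $M_N$ equal $N-i$ for $i=1,\dots,N-1$, and subtracting the diagonal matrix $yD_N^{-1}$ does not change the off-diagonal entries; so the off-diagonal entries of $A$ are $N-1,N-2,\dots,1$, all strictly positive. It is a classical fact about tri-diagonal (Jacobi) matrices with nonzero sub- and super-diagonal entries that every eigenvalue is geometrically simple: if $Av=\mu v$, then writing out the rows of $(A-\mu I)v=0$ from the top, the first equation expresses $v_2$ in terms of $v_1$ (since $b_1\neq 0$), and inductively the $k$-th equation expresses $v_{k+1}$ in terms of $v_1,\dots,v_k$ (since $b_k\neq 0$), so $v_1=0$ forces $v=0$; hence the eigenspace is at most one-dimensional, spanned by the vector with $v_1=1$ and the remaining coordinates determined recursively. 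For a real symmetric matrix, geometric simplicity of every eigenvalue is equivalent to algebraic simplicity, so $A$ has $n$ distinct real eigenvalues, and these are precisely the $n$ distinct real roots $\alpha_1(y)<\dots<\alpha_n(y)$ of $Q_n$.

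I do not expect any real obstacle here: the whole argument is the standard simplicity criterion for Jacobi matrices applied to the explicit matrix $M_N-yD_N^{-1}$, together with the observation — already recorded in the excerpt via \eqref{det replace offdiag} — that only the products $b_ic_i$ of off-diagonal entries matter, which are positive here. The one point to state carefully is why geometric simplicity upgrades to algebraic simplicity, which is just the spectral theorem for symmetric matrices (the algebraic and geometric multiplicities coincide). If one wanted to avoid invoking symmetry at that step, one could alternatively note that the characteristic polynomials of the leading principal minors of a Jacobi matrix with nonzero off-diagonal entries form a Sturm sequence, whose consecutive members have no common zero, forcing simplicity of the roots of the top one; but the symmetry argument is shorter.
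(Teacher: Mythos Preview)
Your proposal is correct and is essentially identical to the paper's own proof: both invoke Lemma~\ref{lem:tridiag} to identify the roots of $Q_n$ with the eigenvalues of the real symmetric tri-diagonal matrix $M_n-yD_n^{-1}$, observe that its off-diagonal entries $n-i$ are all nonzero, and run the standard Jacobi-matrix recursion to conclude each eigenspace is one-dimensional. Your explicit remark that geometric simplicity upgrades to algebraic simplicity via the spectral theorem is a point the paper leaves implicit, and your alternative Sturm-sequence argument is not in the paper, but neither changes the substance.
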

 \begin{proof} 
 We saw in Lemma~\ref{lem:tridiag} that we can write $Q_n(x)$ as the characteristic polynomial of the symmetric tri-diagonal matrix $A_n=M_n-y D_n^{-1}$, all of whose immediate off-diagonal entries are non-zero. Every real symmetric matrix has all its eigenvalues real, and if further it is tri-diagonal with all immediate off-diagonal entries being nonzero then the eigenspaces are one-dimensional, hence the eigenvalues are simple; this is a simple fact, see e.g. \cite[\S 0.9.9]{HJ}. We recall the argument: Write 
 $A_n = \tridiag_n   \left( \begin{smallmatrix}a_i &b_i \\ b_i &\end{smallmatrix} \right)$,    
 with $b_i = n-i$  nonzero ($i=1,\dots, n-1$), 
 then the eigenvalue equation $(A_n-\lambda I)\vec x=0$ reads as
 \[
 \begin{split}
 (a_1-\lambda)x_1 + b_1x_2  &=0
 \\ b_1x_1 + (a_2-\lambda)x_2 + b_2x_3 &=0 
 \\
 \dots & 
 \end{split}
 \]
 and since $b_1\neq 0$, we find $x_2 = (\lambda-a_1)x_1/b_1$ is determined by $x_1$, and since $b_2\neq 0$, 
 $ x_3 = -(b_1x_1+(a_2-\lambda)x_2)/b_2$ so that $x_3$ is also determined by $x_1$, etcetera.
 \end{proof} 
 
According to \cite[Theorem 4.3]{KRBW}, there are exactly $N-\lfloor \sqrt{y_0} \rfloor$ positive zeros.

\section{Proof of the density conjecture}\label{section for theorem 2}
According to Lemma~\ref{lem:tridiag}, $P_N(x,y)/N!$ is the  characteristic polynomial of $M_n-y D_n^{-1}$. 
We view $M_N- y D_N^{-1}$ as a perturbation of the real symmetric matrix~$M_N$.
We need a lemma of Weyl on the effect on the eigenvalues of a perturbation of a given real symmetric (or Hermitian) matrix, see e.g. \cite[Theorem 4.3.1]{HJ}. For any Hermitian $N\times $N matrix $H$, denote its eigenvalues in increasing order by
\[
 \lambda_1(H)\leq \lambda_2(H)\leq \dots \leq \lambda_N(H) .
\]

\begin{lem}[Weyl]\label{Weyl's lemma}
Let $M$ and $B$ be Hermitian $N\times N$ matrices. Then
\[
\lambda_1(B)\leq \lambda_i(M+B)-\lambda_i(M)   \leq  \lambda_N(B) .
\]
\end{lem}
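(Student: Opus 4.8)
The plan is to deduce this from the Courant--Fischer min--max characterization of the eigenvalues of a Hermitian matrix. Recall that for a Hermitian $N\times N$ matrix $A$ with eigenvalues $\lambda_1(A)\leq \dots \leq \lambda_N(A)$, one has
\[
\lambda_i(A) = \min_{\dim S = i}\ \max_{\substack{x\in S\\ x\neq 0}} \frac{\langle Ax,x\rangle}{\langle x,x\rangle},
\]
the minimum taken over all $i$-dimensional subspaces $S\subseteq \mathbb{C}^N$. This is the only substantive input; I would either quote it (it is, e.g., \cite[\S4.2]{HJ}) or include its short proof, which reduces to the observation that any $i$-dimensional subspace has nontrivial intersection with the span of the top $N-i+1$ eigenvectors of $A$, forcing the max of the Rayleigh quotient on it to be at least $\lambda_i(A)$, while the span of the bottom $i$ eigenvectors realizes equality.

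Next I would establish the upper bound. Fix $i$ and let $S$ be any $i$-dimensional subspace. For every nonzero $x$ the Rayleigh quotient is additive,
\[
\frac{\langle (M+B)x,x\rangle}{\langle x,x\rangle} = \frac{\langle Mx,x\rangle}{\langle x,x\rangle} + \frac{\langle Bx,x\rangle}{\langle x,x\rangle} \leq \frac{\langle Mx,x\rangle}{\langle x,x\rangle} + \lambda_N(B),
\]
since the Rayleigh quotient of the Hermitian matrix $B$ never exceeds its largest eigenvalue $\lambda_N(B)$ (diagonalize $B$ and expand $x$ in an orthonormal eigenbasis). Taking the maximum over $x\in S\setminus\{0\}$ and then the minimum over all $i$-dimensional $S$ gives $\lambda_i(M+B)\leq \lambda_i(M)+\lambda_N(B)$, the right-hand inequality.

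For the left-hand inequality I would apply what has just been proved, with $M+B$ in place of $M$ and $-B$ in place of $B$: this yields $\lambda_i(M) = \lambda_i\bigl((M+B)+(-B)\bigr) \leq \lambda_i(M+B) + \lambda_N(-B)$. Since the eigenvalues of $-B$ are the negatives of those of $B$ in reversed order, $\lambda_N(-B) = -\lambda_1(B)$, and rearranging gives $\lambda_1(B)\leq \lambda_i(M+B)-\lambda_i(M)$. There is no genuine obstacle here; the only point requiring care is to fix once and for all the convention (minimum over $i$-dimensional subspaces of the maximum of the Rayleigh quotient, eigenvalues labeled in increasing order) and to use it consistently, together with the elementary two-sided bound $\lambda_1(B)\leq \langle Bx,x\rangle/\langle x,x\rangle \leq \lambda_N(B)$.
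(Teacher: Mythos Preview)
Your argument is correct. The paper does not actually prove this lemma: it is stated with a reference to \cite[Theorem~4.3.1]{HJ} and used as a black box. Your proof via the Courant--Fischer min--max principle is the standard one (and is essentially the argument given in the cited reference), so there is nothing to compare.
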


Taking $M=M_N$ and $B=-y D_N^{-1}$, we obtain a corollary about the motion of the zeros of the polynomial $x\mapsto P_N(x,y)$ as we vary $y$.

\begin{cor}\label{cor:motion of zeros} 
Fix $y >0$. The zeros $\alpha_1(y)< \alpha_2(y)< \dots< \alpha_N(y)$ of the polynomial
$Q_N(x)=P_N(x,y)/N!$ are decreasing as a function of $y$, and
\[
\alpha_i(y)-\alpha_i(0)\in [-y,-\frac{y}{N}].
\]
\end{cor}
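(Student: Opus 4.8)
The statement to prove is Corollary~\ref{cor:motion of zeros}: for fixed $y>0$, the zeros $\alpha_i(y)$ of $Q_N(x)=P_N(x,y)/N!$ are decreasing in $y$ and satisfy $\alpha_i(y)-\alpha_i(0)\in[-y,-y/N]$. The strategy is to apply Weyl's perturbation inequality (Lemma~\ref{Weyl's lemma}) to the pencil of matrices provided by Lemma~\ref{lem:tridiag}, together with a monotonicity argument for the $y$-dependence.

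\textbf{Step 1: Identify the matrices.} By Lemma~\ref{lem:tridiag}, $Q_N(x)=\det(xI_N-(M_N-yD_N^{-1}))$, so the zeros $\alpha_1(y)<\dots<\alpha_N(y)$ (simple and real by Lemma~\ref{lem:distinct roots of Q}) are exactly the eigenvalues $\lambda_i(M_N-yD_N^{-1})$ in increasing order. In the notation of Weyl's lemma, set $M=M_N$ (whose eigenvalues are $\alpha_i(0)$, consistent with Lemma~\ref{prop:Laguerre} and the reality of Laguerre zeros) and $B=-yD_N^{-1}$. The matrix $D_N^{-1}$ is diagonal with entries $1,1/2,\dots,1/N$, so $B$ is diagonal with eigenvalues $-y/1,-y/2,\dots,-y/N$; hence $\lambda_1(B)=-y$ and $\lambda_N(B)=-y/N$.

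\textbf{Step 2: Apply Weyl.} Lemma~\ref{Weyl's lemma} with $M+B=M_N-yD_N^{-1}$ gives immediately
\[
-y=\lambda_1(B)\leq \lambda_i(M_N-yD_N^{-1})-\lambda_i(M_N)\leq \lambda_N(B)=-\frac{y}{N},
\]
that is $\alpha_i(y)-\alpha_i(0)\in[-y,-y/N]$, which is the displayed bound.

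\textbf{Step 3: Monotonicity in $y$.} For $0<y<y'$ write $M_N-y'D_N^{-1}=(M_N-yD_N^{-1})+B'$ with $B'=-(y'-y)D_N^{-1}$, a negative semidefinite (in fact negative definite) diagonal matrix, so $\lambda_N(B')=-(y'-y)/N<0$. Weyl's lemma applied to this pair yields $\alpha_i(y')-\alpha_i(y)\leq \lambda_N(B')<0$, proving each $\alpha_i$ is strictly decreasing in $y$. (Alternatively one can invoke the standard fact that adding a negative definite matrix strictly decreases every eigenvalue.) One small thing to check is that the labeling is consistent as $y$ varies — but since the $\alpha_i(y)$ are always simple (Lemma~\ref{lem:distinct roots of Q}) and depend continuously on $y$, the increasing-order indexing matches the ``$i$-th eigenvalue'' indexing throughout, so no crossing ambiguity arises.

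\textbf{Main obstacle.} There is no serious obstacle here: the corollary is essentially a direct translation of Weyl's inequality once the matrix pencil from Lemma~\ref{lem:tridiag} is in hand, and the only points requiring a word of care are (i) confirming $\alpha_i(0)=\lambda_i(M_N)$, i.e.\ that the $y=0$ specialization of the determinant identity really gives the eigenvalues of $M_N$ in the claimed order, and (ii) the bookkeeping that the ordered-eigenvalue indexing is stable under the perturbation, both of which follow from simplicity and continuity. The rest is bounding the eigenvalues of the explicit diagonal matrix $-yD_N^{-1}$, which is immediate.
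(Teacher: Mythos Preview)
Your proof is correct and follows exactly the paper's approach: the paper simply says ``Taking $M=M_N$ and $B=-yD_N^{-1}$, we obtain a corollary,'' and you have spelled out precisely this application of Weyl's lemma, together with the (implicit in the paper) monotonicity argument via a second application with $B'=-(y'-y)D_N^{-1}$. Your additional remarks on eigenvalue labeling and simplicity are harmless elaborations of the same idea.
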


Recall that for $y=0$, the zeros $\alpha_i(0)$ of $P_N(x,0)$ are the zeros  $\lambda_{N,i}$ of the Laguerre polynomial $L_N(x)$. We would like to deduce that the zeros $\alpha_i(y_0)$ of $Q_N(x)=P_N(x,y_0)/N!$ are close to the zeros $\lambda_{N,i}$ of the Laguerre polynomial $L_N(x)$. However, Corollary~\ref{cor:motion of zeros} only lets us deduce that
\[
|  \alpha_i(y_0)-\lambda_{N,i} |\leq y_0 .
\]
In order to show that this difference tends to zero in a relevant range, we add an ingredient.

We recall Cauchy's interlacing theorem, see e.g. \cite[Theorem 4.3.28]{HJ}:
\begin{thm}
Let $A=A^*$ be an $n\times n$ Hermitian matrix, and $B=B^*$ any $(n-m)\times (n-m)$ principal minor, obtained by deleting from $A$ the $i$-th row and $i$-th column, for $m$ different values of $i$. For instance we can take $B$ as the lower $(n-m)\times (n-m)$ submatrix of $A$:
\[
A= \begin{pmatrix} C_{m} & X^*\\ X  & B_{n-m}\end{pmatrix} .
\]
Let $\alpha_1\leq \alpha_2\leq \dots \leq \alpha_n$ be the eigenvalues of $A$ in increasing order, and $\beta_1\leq \beta_2\leq \dots \leq \beta_{n-m}$ those of $B$. Then
\[
\alpha_k\leq \beta_k\leq \alpha_{k+m},\quad k=1,\dots,n-m
\]
and if $m\leq n/2$,
\[
\beta_{k-m} \leq \alpha_k\leq \beta_k,\quad k= m+1,\dots , n-m.
\]
\end{thm}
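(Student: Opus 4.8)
The plan is to reduce the entire statement to the Courant--Fischer min--max characterization of the eigenvalues of a Hermitian matrix: if $A=A^*$ is $n\times n$ with eigenvalues $\alpha_1\leq\dots\leq\alpha_n$, then for each $1\leq k\leq n$,
\[
\alpha_k=\min_{\substack{V\subseteq\C^n\\ \dim V=k}}\ \max_{\substack{x\in V\\ x\neq0}}\ \frac{x^*Ax}{x^*x},
\]
and the minimum is attained (the Grassmannian is compact and the Rayleigh quotient is continuous). The first thing I would record is the elementary geometric observation that deleting $m$ rows together with the $m$ matching columns realizes $B=P^*AP$, where $P\colon\C^{n-m}\to\C^n$ is the isometric inclusion onto the coordinate subspace $U\subseteq\C^n$ spanned by the retained basis vectors; in particular $\dim U=n-m$, and the Rayleigh quotient of $B$ at $x\neq 0$ equals the Rayleigh quotient of $A$ at $Px$.

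The lower interlacing bound $\alpha_k\leq\beta_k$ (for $1\leq k\leq n-m$) is then immediate: any $k$-dimensional $V\subseteq\C^{n-m}$ maps to the $k$-dimensional subspace $PV\subseteq\C^n$, and $\max_{0\neq x\in V}(x^*Bx/x^*x)=\max_{0\neq y\in PV}(y^*Ay/y^*y)\geq\alpha_k$; minimizing over $V$ gives $\beta_k\geq\alpha_k$. For the upper bound $\beta_k\leq\alpha_{k+m}$ I would run a complementary dimension count: pick a $(k+m)$-dimensional $V\subseteq\C^n$ attaining the min for $\alpha_{k+m}$; since $U$ has codimension $m$, the intersection $V\cap U$ has dimension at least $k$, so it contains a $k$-dimensional subspace $V'=PV''$ with $V''\subseteq\C^{n-m}$, and then
\[
\beta_k\leq\max_{0\neq x\in V''}\frac{x^*Bx}{x^*x}=\max_{0\neq y\in V'}\frac{y^*Ay}{y^*y}\leq\max_{0\neq y\in V}\frac{y^*Ay}{y^*y}=\alpha_{k+m}.
\]
Together these give the first chain $\alpha_k\leq\beta_k\leq\alpha_{k+m}$.

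For the refined statement I would simply reindex the upper bound: replacing $k$ by $k-m$ turns $\beta_k\leq\alpha_{k+m}$ into $\beta_{k-m}\leq\alpha_k$, valid for $m+1\leq k\leq n$; combined with $\alpha_k\leq\beta_k$, valid for $1\leq k\leq n-m$, this yields $\beta_{k-m}\leq\alpha_k\leq\beta_k$ on the overlap $m+1\leq k\leq n-m$. When $m\leq n/2$ this range is either non-empty, in which case the conclusion holds as just shown, or (only when $(n-1)/2<m\leq n/2$) empty, in which case there is nothing to prove.

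I do not expect a genuine obstacle: this is the classical Cauchy interlacing theorem and the argument above is essentially the one in \cite[\S4.3]{HJ}. The only points requiring a little care are the bookkeeping of indices (and the degenerate range in the ``moreover'' clause) and the fact that the Courant--Fischer minimum is actually attained, which is what makes the dimension count in the upper bound go through. If one prefers to sidestep even that, an alternative is to prove the single-deletion case $m=1$ first and then iterate, since deleting $m$ rows and columns can be carried out one index at a time and the interlacing bounds compose cleanly.
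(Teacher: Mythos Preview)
Your argument is correct: the Courant--Fischer min--max formula gives both inequalities cleanly, the dimension count $\dim(V\cap U)\geq k$ is the right mechanism for the upper bound, and the reindexing to obtain the second chain is accurate (including your observation that the range $m+1\leq k\leq n-m$ is vacuous precisely when $n$ is even and $m=n/2$).

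There is nothing to compare against: the paper does not supply its own proof of this theorem but simply quotes it as the classical Cauchy interlacing theorem, with a reference to \cite[Theorem~4.3.28]{HJ}. Your proof is essentially the standard one found there, so in effect you have reproduced the argument the paper defers to.
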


We apply the interlacing theorem to the matrix $A_N=M_N-y D_N^{-1}$, take $m\leq N/2$ (chosen later) and $B$ the lower $(N-m)\times(N-m)$ principal minor. Therefore the eigenvalues of $A_N$, which are the zeros of $Q_N(x)= P_N(x,y)/N!$, interlace the eigenvalues $\beta_1< \dots < \beta_{N-m}$ of~$B$.

Observe that the lower $(N-m)\times (N-m)$ principal minor of $M_N$ is $M_{N-m}$. Thus
\[
B=M_{N-m}  - y\begin{pmatrix} \frac 1{m+1} & &  \\  & \frac 1{m+2} & \\ \vdots \\  & & \frac 1N \end{pmatrix}
\]
Therefore, the eigenvalues $\beta_1< \dots <  \beta_{N-m}$ of $B$ are perturbations of the eigenvalues of $M_{N-m}$, which are the zeros $\lambda_{N-m,1}<\dots <\lambda_{N-m,N-m}$ of the Laguerre polynomial $L_{N-m}(x)$. Using Lemma~\ref{Weyl's lemma} we obtain:
\[
\beta_i -\lambda_{N-m,i}\in [-\frac{y}{m+1},-\frac{y}{N}],\quad i=1,\dots, N-m
\]
in particular
\begin{equation}\label{beta vs lambda}
-\frac{y}{m+1}\leq \beta_i -\lambda_{N-m,i}<0, \quad i=1,\dots, N-m.
\end{equation}
The interlacing theorem together with \eqref{beta vs lambda} gives that the eigenvalues $\alpha_1(y)<\dots < \alpha_N(y)$ of $A_N$ satisfy
\[
\lambda_{N-m,k-m}-\frac{y}{m+1}\leq   \beta_{k-m} \leq \alpha_k(y)\leq \beta_k < \lambda_{N-m,k}, 
\]
for 
\[
 k= m+1,\dots , N-m.
 \]
Since these eigenvalues are the zeros of the  polynomial $x\mapsto P_N(x,y)$, 
we find

\begin{thm}\label{thm:consequence of interlacing} 
Fix $y>0$.   Let $m\leq N/2$.    Then the zeros $\alpha_1(y)< \dots < \alpha_N(y)$ of the
polynomial $Q_N(x)= P_N(x,y)/N!$ satisfy
\[
  \lambda_{N-m,k-m}-\frac{y}{m+1}\leq     \alpha_k(y)  < \lambda_{N-m,k},\quad k= m+1,\dots , N-m
\]
where $\lambda_{N-m,1}<\dots <\lambda_{N-m,N-m}$ are the zeros of the Laguerre polynomial $L_{N-m}(x)$.
\end{thm}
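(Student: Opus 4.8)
The plan is to read the bound straight off the matrix picture already set up in Lemma~\ref{lem:tridiag}, which presents $Q_N(x) = P_N(x,y)/N!$ as the characteristic polynomial of the real symmetric tridiagonal matrix $A_N = M_N - yD_N^{-1}$; thus the numbers $\alpha_1(y) < \cdots < \alpha_N(y)$ are exactly the eigenvalues of $A_N$. The strategy is then to compare $A_N$ with a principal submatrix via Cauchy interlacing, arranging that the unperturbed part of this submatrix is again one of the matrices $M_\bullet$, and to control the diagonal perturbation $-yD^{-1}$ by Weyl's inequalities.

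First I would apply Cauchy's interlacing theorem to $A_N$, taking $B$ to be its lower $(N-m)\times(N-m)$ principal minor. Since $m \leq N/2$, the second conclusion of that theorem applies and yields
\[
\beta_{k-m} \leq \alpha_k(y) \leq \beta_k, \qquad k = m+1,\dots, N-m,
\]
where $\beta_1 < \cdots < \beta_{N-m}$ are the eigenvalues of $B$. The one structural observation that makes the argument run is that the lower $(N-m)\times(N-m)$ principal minor of $M_N$ is \emph{again} of the same shape, namely $M_{N-m}$: deleting the first $m$ rows and columns turns the diagonal entry $2(N-i)+1$ into $2((N-m)-j)+1$ and the off-diagonal entry $N-i$ into $(N-m)-j$, with $j = i-m$. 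Since the lower block of $yD_N^{-1}$ is $y\,\mathrm{diag}(1/(m+1),\dots,1/N)$, this gives $B = M_{N-m} - y\,\mathrm{diag}(1/(m+1),\dots,1/N)$.

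Next I would apply Weyl's lemma (Lemma~\ref{Weyl's lemma}) to $B$, viewed as $M_{N-m}$ perturbed by the diagonal matrix with entries $-y/(m+1),\dots,-y/N$, all of which lie in $[-y/(m+1), -y/N]$; this yields $-y/(m+1) \leq \beta_i - \lambda_i(M_{N-m}) < 0$ for every $i$. It remains to identify the eigenvalues of $M_{N-m}$: applying Lemma~\ref{lem:tridiag} with $y = 0$ and $N$ replaced by $N-m$ shows $\det(xI_{N-m} - M_{N-m}) = P_{N-m}(x,0)/(N-m)!$, and by Lemma~\ref{prop:Laguerre} this is a nonzero constant times $L_{N-m}(x)$. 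Hence the eigenvalues of $M_{N-m}$ are precisely the Laguerre zeros $\lambda_{N-m,1} < \cdots < \lambda_{N-m,N-m}$, so $\lambda_{N-m,i} - y/(m+1) \leq \beta_i < \lambda_{N-m,i}$. Feeding $\beta_{k-m} \geq \lambda_{N-m,k-m} - y/(m+1)$ and $\beta_k < \lambda_{N-m,k}$ into the interlacing chain gives the asserted inequality for $k = m+1,\dots,N-m$.

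I do not anticipate a genuine difficulty: the proof is a bookkeeping assembly of two textbook perturbation theorems on top of the explicit tridiagonal model of Lemma~\ref{lem:tridiag}. The only place that calls for any attention is the self-similarity of the minors — that a principal submatrix of $M_N$ is again some $M_{N-m}$, so the comparison object is once more a Laguerre polynomial — together with keeping track of which of the resulting inequalities is strict (the upper bound $\alpha_k(y) < \lambda_{N-m,k}$ uses the strict bound $\beta_i < \lambda_{N-m,i}$, which in turn comes from $y/N > 0$).
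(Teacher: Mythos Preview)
Your proposal is correct and follows essentially the same route as the paper: Cauchy interlacing applied to the lower $(N-m)\times(N-m)$ principal minor of $A_N=M_N-yD_N^{-1}$, the observation that this minor is $M_{N-m}$ perturbed by $-y\,\mathrm{diag}(1/(m+1),\dots,1/N)$, and Weyl's inequality to compare with the Laguerre zeros. You even track the strict upper bound via $y/N>0$, just as the paper does.
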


\subsection{Proof of Theorem~\ref{thm:densityJudd} }

We first show:

\begin{thm}\label{thm: density of alphas}
Fix $y>0$. Then for any $x>0$,
\[
\lim_{N\to \infty} \frac 1{\sqrt{N}}  \#\{ k=1,\dots, N: \alpha_k(y)\leq x\} =\frac {2}{\pi}\sqrt{x} .
\]
\end{thm}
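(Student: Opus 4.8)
The plan is to sandwich the counting function $\#\{k : \alpha_k(y)\le x\}$ between two counting functions for zeros of Laguerre polynomials, using Theorem~\ref{thm:consequence of interlacing}, and then apply Gawronski's theorem (Theorem~\ref{Gawronski thm}) to both bounds. The key point is that we get to choose the parameter $m=m(N)$, and we will take it growing slowly, say $m=m(N)=\lfloor N^{1/4}\rfloor$, so that $m\to\infty$ but $m=o(\sqrt N)$ and $m\le N/2$ for $N$ large; then the shift $y/(m+1)\to 0$, the boundary terms $k\le m$ and $k\ge N-m$ contribute $O(m)=o(\sqrt N)$ to the count, and $L_{N-m}$ is "close enough" to $L_N$ for Gawronski's asymptotic to apply with the same constant (since $\sqrt{N-m}\sim\sqrt N$).

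First I would fix $y>0$ and $x>0$, and set $m=m(N)$ as above. From Theorem~\ref{thm:consequence of interlacing}, for $k=m+1,\dots,N-m$ we have $\lambda_{N-m,k-m}-\frac{y}{m+1}\le \alpha_k(y)<\lambda_{N-m,k}$. For the upper bound on the count: if $\alpha_k(y)\le x$ with $m+1\le k\le N-m$, then either $\lambda_{N-m,k-m}\le x+\frac{y}{m+1}$, so
\[
\#\{k: \alpha_k(y)\le x\}\le 2m + \#\{j=1,\dots,N-m : \lambda_{N-m,j}\le x+\tfrac{y}{m+1}\}.
\]
For the lower bound: if $\lambda_{N-m,k}\le x$ and $m+1\le k\le N-m$ then $\alpha_k(y)<x$, hence (reindexing, and discarding at most $2m$ indices at the ends)
\[
\#\{k: \alpha_k(y)\le x\}\ge \#\{j=1,\dots,N-m : \lambda_{N-m,j}\le x\} - 2m.
\]
Dividing through by $\sqrt N$, the $2m/\sqrt N\to 0$; and writing $\frac{1}{\sqrt N}\#\{\cdots\} = \frac{\sqrt{N-m}}{\sqrt N}\cdot\frac{1}{\sqrt{N-m}}\#\{\cdots\}$ with $\sqrt{N-m}/\sqrt N\to 1$, Gawronski's theorem applied with $N-m\to\infty$ gives that the upper bound tends to $\frac{2}{\pi}\sqrt{x+y/(m+1)}\to\frac2\pi\sqrt x$ (using continuity of $\sqrt{\cdot}$ and $y/(m+1)\to 0$) and the lower bound tends to $\frac2\pi\sqrt x$. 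Squeezing yields the claimed limit.

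The main technical point to be careful about — though it is not really an obstacle — is the interplay of the three limits: one needs $m\to\infty$ fast enough that $y/(m+1)\to 0$ (trivial) and $m\le N/2$ (automatic for large $N$), while keeping $2m=o(\sqrt N)$, which forces $m=o(\sqrt N)$; any choice like $m=\lfloor N^{1/4}\rfloor$ works. One should also note that in the upper-bound step the event "$\alpha_k(y)\le x$" for $k$ in the middle range implies $\lambda_{N-m,k-m}\le x+\frac{y}{m+1}$, and the map $k\mapsto k-m$ is injective into $\{1,\dots,N-m\}$, so no double-counting occurs; similarly the lower bound only undercounts by the at-most-$2m$ excluded end indices. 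With these bookkeeping remarks in place the squeeze is immediate.
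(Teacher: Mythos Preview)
Your proposal is correct and follows essentially the same approach as the paper: sandwich the count using Theorem~\ref{thm:consequence of interlacing} with $m=m(N)\to\infty$, $m=o(\sqrt N)$, and apply Gawronski's theorem to the Laguerre counts on both sides. The only cosmetic difference is that the paper makes the upper-bound step explicit via a fixed auxiliary $\varepsilon>0$ (bounding $y/(m+1)<\varepsilon$ before invoking Gawronski at the fixed threshold $x+\varepsilon$, then letting $\varepsilon\to 0$), whereas you apply Gawronski with the moving threshold $x+y/(m+1)$ directly; this is fine by monotonicity but is worth one line of justification.
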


\begin{proof}
Let $n=N-m$. Using Theorem~\ref{thm:consequence of interlacing}, we obtain
\[
\begin{split}
\#\{ k=1,\dots, N: \alpha_k(y)\leq x\} &\geq
\#\{ k=m+1,\dots, n: \alpha_k(y)\leq x\}
\\
 &\geq \#\{k=m+1,\dots,n: \lambda_{n,k}\leq x\}
\\
&\geq \#\{k=1,\dots,n: \lambda_{n,k}\leq x\}  -m.
\end{split}
\]
By Theorem \ref{Gawronski thm}, we have
\[
 \#\{k=1,\dots,n: \lambda_{n,k}\leq x\}\sim \frac{2}{\pi}\sqrt{x}\cdot \sqrt{n}
\]
and since we choose $m=o(\sqrt{N})$ (so that $\sqrt{N}\sim \sqrt{n}$) we obtain
\[
\liminf_{N\to \infty} \frac 1{\sqrt{N}} \#\{ k=1,\dots, N: \alpha_k(y)\leq x\} \geq  \frac{2}{\pi}\sqrt{x}.
\]

Next,  we will see that for any fixed $\varepsilon>0$,
\[
\limsup_{N\to\infty } \frac 1{\sqrt{N}} \#\{ k=1,\dots, N: \alpha_k(y)\leq x\} \leq \frac{2}{\pi} \sqrt{x+\varepsilon}.
\]
Indeed,  from Theorem~\ref{thm:consequence of interlacing}  we find
\[
\begin{split}
\#\{ k=1,\dots, N : & \,\alpha_k(y)\leq x\}
\leq \#\{ k=m+1,\dots, n: \alpha_k(y)\leq x\} +2m
\\
&\leq \#\{ k=m+1,\dots, n: \lambda_{n,k-m}\leq x+\frac{y}{m+1} \} +2m
\\
&= \#\{ i=1,\dots, n-m: \lambda_{n,i}\leq x+\frac{y}{m+1} \} +2m
\\
&\leq \#\{ i=1,\dots, n: \lambda_{n,i}\leq x+\frac{y}{m+1} \} +2m.
\end{split}
\]
Taking  $N\gg 1$   so that $m=m(N)$ is sufficiently large so that $y/(m+1)<\varepsilon$ (but still $m=o(\sqrt{N})$)   we find
\[
\begin{split}
\#\{ k=1,\dots, N: \alpha_k(y)\leq x\}
&  \leq \#\{ i=1,\dots, n: \lambda_{n,i}\leq x +\varepsilon\} +2m.\\
\end{split}
\]
Again applying Gawronski's theorem gives that as $N\to \infty$,
\[
 \#\{ k=1,\dots, n: \lambda_{n,k}\leq x +\varepsilon\} \sim \frac{2}{\pi }\sqrt{n} \sqrt{x+\varepsilon}
\]
and therefore (recall $m=o(\sqrt{N})$ and $\sqrt{N}\sim \sqrt{n}$),
\[
\limsup_{N\to\infty } \frac 1{\sqrt{N}} \#\{ k=1,\dots, N: \alpha_k(y)\leq x\} \leq \frac{2}{\pi} \sqrt{x+\varepsilon}.
\]
Since $\varepsilon>0$ is arbitrary, we obtain
\[
\lim_{N\to \infty} \frac 1{\sqrt{N}}  \#\{ k=1,\dots, N: \alpha_k(y)\leq x\} =\frac {2}{\pi}\sqrt{x} .
\]
\end{proof}

To translate to the precise statement of Theorem~\ref{thm:densityJudd}, recall that for $N-g^2$ to be a Juddian eigenvalue of the Hamiltonian $H_{g,\Delta}$ is equivalent to $g$ satisfying the constraint equation $P_N( (2g)^2,\Delta^2)=0$, meaning $(2g)^2=\alpha_k(\Delta^2)$ for some $k=1,\dots, N$. Thus
\begin{multline*}
\#\{g \leq \Gamma :  N-g^2 \mathrm{\;  is \; a \; Juddian \; eigenvalue \; for\; } H_{g,\Delta} \}
\\
=  \#\{ k=1,\dots, N: \alpha_k(\Delta^2)\leq (2\Gamma)^2\} .
\end{multline*}
 By Theorem~\ref{thm: density of alphas}, as $N\to \infty$, this is asymptotic to
\[
\sqrt{N} \cdot \frac{2}{\pi} \cdot 2 \Gamma
\]
 as claimed. 

In particular, if we fix $x_0>0$ and $0<\delta<x_0$, the the number of coupling constants $g\in (x_0-\delta,x_0+\delta)$ for which $N-g^2$ is a Juddian eigenvalue for $H_{g,\Delta}$ is asymptotically $\sqrt{N} \cdot\frac{8\delta}{\pi}>0$ as $N\to \infty$, so we obtain density.

\section{Proof of Theorem \ref{thm:Juddv1m}}\label{section for theorem 1}
\subsection{Properties of $Z_m$}
 We denote by $Z_n$ the zero locus 
 of the constraint polynomial $P_n(X,Y)$,
  \[
  Z_n=\left\{(X,Y): P_n(X,Y)=0 \right\}. 
  \]
 We now determine the connected components (branches) of the intersection of  $Z_n$ with the first quadrant $\{(X,Y):X\geq 0, Y\geq 0\}$, see  Figure~\ref{fig:kus3}. 
  
\begin{lem}\label{lem:first branch}
 The intersection of  $Z_n$ with the first quadrant $\{(X,Y):X\geq 0, Y\geq 0\}$ consists of exactly $n$ connected components, all disjoint, and for each $m=1,\dots,n$ there is a unique component $Z_{n,m}$ linking the point $(0,m^2)$ to $(0,\lambda_{n,m})$. 
  \end{lem}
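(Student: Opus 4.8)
The plan is to follow the $n$ roots of $x\mapsto P_n(x,y)$ as $y$ grows from $0$ to $\infty$ and to show that each one sweeps out exactly one branch. Fix $n$. For every $y\ge 0$ the polynomial $Q_n(x)=P_n(x,y)/n!$ is monic of degree $n$ with $n$ distinct real roots $\alpha_1(y)<\dots<\alpha_n(y)$: for $y>0$ this is Lemma~\ref{lem:distinct roots of Q}, and for $y=0$ it follows from Lemma~\ref{prop:Laguerre} together with the simplicity of the Laguerre zeros, so that $\alpha_m(0)=\lambda_{n,m}$. Since the roots stay simple and the coefficients of $Q_n$ vary continuously (indeed analytically) in $y$, the $n$ local branches supplied by the implicit function theorem patch into $n$ globally continuous functions $\alpha_m\colon[0,\infty)\to\R$. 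By Corollary~\ref{cor:motion of zeros} — and the same Weyl-type estimate applied between any two parameter values — each $\alpha_m$ is \emph{strictly} decreasing, and $\alpha_m(y)\le \lambda_{n,m}-y/n\to-\infty$ as $y\to\infty$.

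Next I pin down the unique parameter at which $\alpha_m$ vanishes. By the intermediate value theorem and strict monotonicity, each $\alpha_m$ has exactly one zero $y_m^\ast>0$, and $\alpha_m(y_m^\ast)=0$ forces $P_n(0,y_m^\ast)=0$, so by Lemma~\ref{lem:Intersect with Y axis} we have $y_m^\ast\in\{1^2,2^2,\dots,n^2\}$. The $y_m^\ast$ are pairwise distinct, since two coinciding ones would make $x\mapsto P_n(x,y_m^\ast)$ have a repeated root, against Lemma~\ref{lem:distinct roots of Q}; hence $\{y_1^\ast,\dots,y_n^\ast\}=\{1^2,\dots,n^2\}$. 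Moreover $m\mapsto y_m^\ast$ is increasing: at $y=y_m^\ast$ we have $0=\alpha_m(y_m^\ast)<\alpha_{m+1}(y_m^\ast)$, and $\alpha_{m+1}$ is decreasing with its only zero at $y_{m+1}^\ast$, so $y_{m+1}^\ast>y_m^\ast$. Therefore $y_m^\ast=m^2$, and consequently $\alpha_m(y)>0$ on $[0,m^2)$, $\alpha_m(m^2)=0$, and $\alpha_m(y)<0$ on $(m^2,\infty)$. (Alternatively, the sign changes can be read off from the count of $n-\lfloor\sqrt y\rfloor$ positive zeros in \cite[Theorem~4.3]{KRBW}.)

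Now for $m=1,\dots,n$ set $Z_{n,m}:=\{(\alpha_m(y),y):0\le y\le m^2\}$. This is the image of the compact interval $[0,m^2]$ under the continuous map $y\mapsto(\alpha_m(y),y)$, which is injective since the second coordinate separates points; hence $Z_{n,m}$ is a compact arc. It lies in $Z_n$ because $P_n(\alpha_m(y),y)=0$; it lies in the closed first quadrant by the sign information just established; and its endpoints are $(\alpha_m(0),0)=(\lambda_{n,m},0)$ and $(\alpha_m(m^2),m^2)=(0,m^2)$, so $Z_{n,m}$ links $(0,m^2)$ to $(\lambda_{n,m},0)$ (the point on the $x$-axis where $L_n$ vanishes). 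These arcs exhaust $Z_n\cap\{X\ge0,Y\ge0\}$: if $(X_0,Y_0)\in Z_n$ with $X_0,Y_0\ge0$ and $Y_0>0$, then $X_0=\alpha_m(Y_0)$ for some $m$ and $X_0\ge0$ forces $Y_0\le m^2$, so $(X_0,Y_0)\in Z_{n,m}$; while if $Y_0=0$ then $X_0$ is a zero of $L_n$, say $\lambda_{n,m}$, which is the endpoint $(\lambda_{n,m},0)\in Z_{n,m}$. They are pairwise disjoint: a common point with $Y_0>0$ would give $\alpha_m(Y_0)=\alpha_{m'}(Y_0)$ with $m\ne m'$, impossible by Lemma~\ref{lem:distinct roots of Q}, and one with $Y_0=0$ would give $\lambda_{n,m}=\lambda_{n,m'}$, impossible by simplicity of the Laguerre zeros.

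Finally, $Z_n\cap\{X\ge0,Y\ge0\}$ is thereby a finite disjoint union of compact sets, so each $Z_{n,m}$ is open and closed in it; being also connected, each $Z_{n,m}$ is a connected component, and there are exactly $n$ of them. Uniqueness of the component through $(0,m^2)$ is immediate, as $(0,m^2)\in Z_{n,m}$ and lies in no other piece. The one step that needs care is the middle one — correctly matching the root $\alpha_m$ with the axis crossing $y=m^2$ — and this rests on three facts already in hand: the strict ordering $\alpha_1(y)<\dots<\alpha_n(y)$ at every $y$, the divergence $\alpha_m(y)\to-\infty$ coming from Corollary~\ref{cor:motion of zeros}, and Lemma~\ref{lem:Intersect with Y axis} identifying the $Y$-axis intercepts as precisely $n$ distinct squares; the rest is point-set topology.
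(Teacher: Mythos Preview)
Your proof is correct and follows essentially the same strategy as the paper: parametrize $Z_n$ by the $n$ simple roots $\alpha_1(y)<\dots<\alpha_n(y)$ of $x\mapsto P_n(x,y)$, trace each root as $y$ varies, and identify the resulting arcs as the components. The one substantive difference is in how the matching $\alpha_m\leftrightarrow(0,m^2)$ is obtained: the paper invokes the positive-root count $n-\lfloor\sqrt{y}\rfloor$ from \cite[Theorem~4.3]{KRBW}, whereas you derive it directly from strict monotonicity (via the Weyl estimate of Corollary~\ref{cor:motion of zeros}) together with a pigeonhole argument on the $n$ axis crossings from Lemma~\ref{lem:Intersect with Y axis}; you even note the paper's route as an alternative. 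Your version is more self-contained and supplies the topological details (continuity of the root functions, compactness and disjointness of the arcs, why they are the components) that the paper leaves implicit.
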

  \begin{figure}[ht]
\begin{center}
\includegraphics[height=60mm]{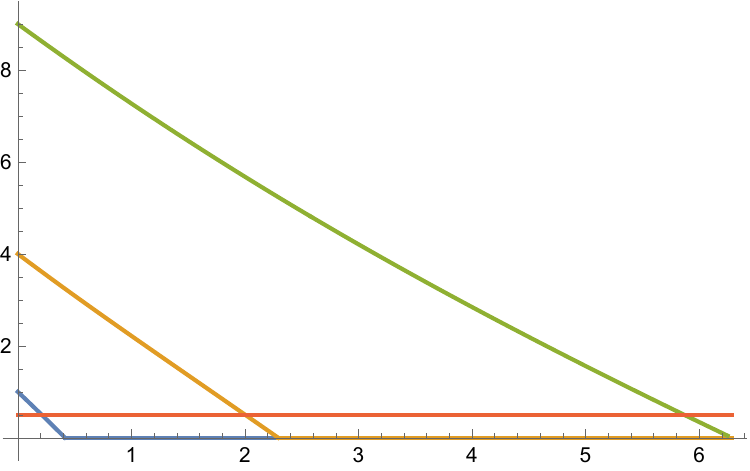}
\caption{  The zero locus $P_3(x,y)=0$. There are three branches, the $m$-th connecting the point $(m^2,0)$ to $(0,\lambda_{3,m})$ where  $0<\lambda_{3,1}<\lambda_{3,2} <\lambda_{3,3}$ are the zeros of the Laguerre polynomial $L_3(x)$.   The line $y=0.5$ intersects the zero locus in three points, all in the positive quadrant. }
\label{fig:kus3}
\end{center}
\end{figure}
\begin{proof}
For each fixed $y_0\geq 0$, we know from Lemma~\ref{lem:distinct roots of Q} that the intersection of $Z_n$ with the horizontal line $y=y_0$ consists of $n$ distinct points $(\alpha_m(y_0),y_0)$ with  $\alpha_1(y_0)<\dots <\alpha_n(y_0)$, of which exactly $n-\lfloor \sqrt{y_0}\rfloor$ lie in the first quadrant. 

 By Lemma~\ref{lem:Intersect with Y axis}, the intersection of $Z_n$  with the $y$-axis are the points $\{(0,m^2):m=1,\dots, n\}$, and by Lemma~\ref{prop:Laguerre}, the intersection with the $x$-axis are the points $(0,\lambda_{n,m})$, $m=1,\dots,n$ where $0<\lambda_{n,1}<\dots<\lambda_{n,n}$ are the zeros of the Laguerre polynomial $L_n(x)$.

For each $m=1,\dots,n$, the curve $(y,\alpha_m(y_0))$ lies in $Z_n$. When $y_0=0$, the points are $(y_m(0),0) = (\lambda_{n,m},0)$. 
 Since the points $\alpha_i(y_0)$ are distinct, we see that these curves are disjoint. Since $Z_n$ intersects the $y$-axis at the points $(0,m^2)$, $m=1,\dots,n$, we see that there is exactly one branch linking $(0,m^2)$ to the point $(\lambda_{n,m},0)$ and this is precisely
 \[
 Z_{n,m}=\{(\alpha_m(y),y):0\leq y\leq m^2\}.
 \]
\end{proof}

\subsection{Proof of Theorem~\ref{thm:Juddv1m}}

\begin{proof}
 We need to produce an infinite sequence of $N$'s, and points in the intersection $Z_m\cap Z_N$, which are distinct for different $N$'s.

Set $\lambda:=\lambda_{m,1}$, and let $Z_{m,1}$ be the unique branch of $Z_m$ linking $(0,1)$ to $(\lambda_{m,1},0)=(\lambda,0)$. For $N>m$, consider the branches $Z_{N,i}$ for all $i\geq 2$ for which $\lambda_{N,i}<\lambda $. By Gawronski's theorem (Theorem~\ref{Gawronski thm}), for $N\gg 1$, there are about $\frac  2\pi \sqrt{N \lambda }$ such branches. The intersection of such a branch $Z_{N,i}$  with the $y$-axis is  $(0,i^2)$, which lies above the intersection $(0,1)$ of $Z_{m,1}$ with the $y$-axis, while its intersection $(\lambda_{N,i},0)$ with the $x$-axis lies to the left of the intersection $(\lambda ,0)$ of $Z_{m,1}$ with the $x$-axis. Hence $Z_{N,i}$ must intersect $Z_{m,1}$, at some point in the first quadrant, as in Figure~\ref{figsub:Z20magnified}.

  \begin{figure}
 \begin{subfigure}[b]{0.45\textwidth}
    \includegraphics[height=50mm , width=\textwidth]{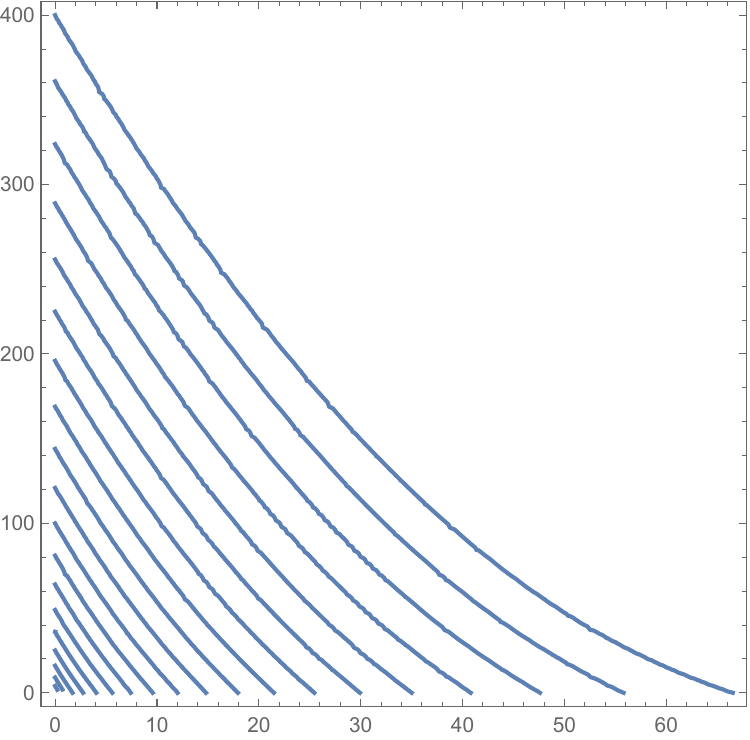}
    \caption{    }\label{figsub:Z20full}
   \end{subfigure}
    \hfill
   \begin{subfigure}[b]{0.45\textwidth}
    \includegraphics[height=50mm, width=\textwidth ]{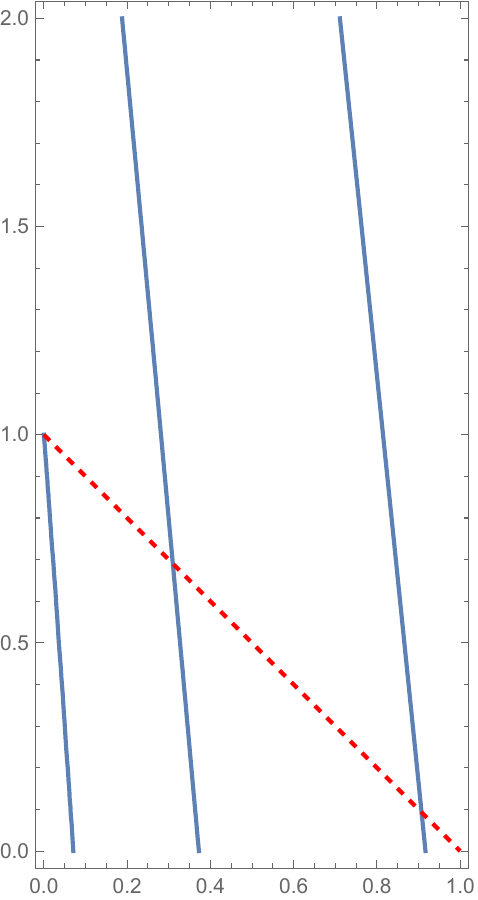}
    \caption{   }   \label{figsub:Z20magnified}
  \end{subfigure}
   \caption{The zero set $Z_{20}$ of the constraint polynomial $P_{20}(x,y)$ (left). The intercepts with the $y$-axis are at $y=m^2$, $m=1,\dots,20$, and the intercepts with the $x$-axis are at the zeros of the Laguerre polynomial $L_{20}(x)$, which are  $\lambda_{20,1}=0.0705399$, $\lambda_{20,2}=0.372127$, $\lambda_{20,3}=0.916582$, $\ldots, \lambda_{20,20} = 66.5244$.
    On the right, the region $0<x<1$, $0<y<2$, with   $Z_1=\{x+y-1=0\}$  (dashed, red).  }
     \label{fig:Z20}
\end{figure}

 Denote by $P_{N,k}\in Z_{m,1} \cap Z_{N,k}$ the first such intersection point (in case there is more than one), and by $\mathcal P(N)=\{P_{N,i}: \lambda_{N,i}\leq \lambda_{m,1}\}$.  
 The number of these points is hence
 \[
 \#\mathcal P(N) \sim \frac{2}{\pi}\sqrt{\lambda  N}.
 \]
    These points are all distinct because they belong to disjoint branches of $Z_N$.

 Take the sequence   $N_i=100^i$. Then the number of intersection points $\#\mathcal P(N_i)$    is $\sim  \frac {2}{\pi} \sqrt{\lambda N_i} $, while the number of those produced in the previous stages is 
 \begin{multline*}
\sum_{j\leq i-1} \#\mathcal P(N_i) \sim  \sum_{1\leq j\leq i-1}\frac  2\pi  \sqrt{\lambda N_j} = \frac 2\pi \sqrt{\lambda}  \sum_{1\leq j\leq i-1} 10^j
  \\
 <\frac {2\sqrt{\lambda}}{\pi  } \frac{ 10^i}{10-1} =\frac 1{9} \cdot \frac {2}{\pi} \sqrt{\lambda N_i} ,
 \end{multline*}
 which is strictly smaller than the number $\#\mathcal P(N_i)$ of new ones produced. Therefore we find a new point in
 $\mathcal P(N_i)$ on $Z_{m,1}\cap Z_{N_i}$, distinct from the earlier points $\mathcal P(N_j)\subset Z_{m,1}\cap Z_{N_j}$ for $j<i$. 
\end{proof}

\end{document}